\documentclass{amsart}
\usepackage{Preamble}
\usepackage[textsize=scriptsize]{todonotes}

\title{Equivariantly Slice Knots in Symmetric 4-Manifolds}
\author{Malcolm Gabbard}
\begin{document}
\maketitle

\begin{abstract}
    We study the equivariant 4-genus of strongly invertible knots in the $S^3$ boundary of 4-manifolds with involution. We provide techniques for constructing slice disks for knots in various symmetric 4-manifolds via an equivariant version of Marengon and Mihajlovi\`c's tubing construction. Using these techniques, we show that this equivariant 4-genus can differ from the standard 4-genus function of the 4-manifold as well as the equivariant 4-genus of $S^4$. As an example, we show that $S^2\times S^2$ admits an involution such that the figure $8$ knot is equivariantly slice with respect to one of its two strong inversions but not the other.
\end{abstract}

\section{Introduction}

For a smooth 4-manifold $X$ and knot $K\subset S^3$, the $X$-genus of $K$, denoted $g_X(K)$, is the minimal genus of a properly embedded smooth surface $\Sigma$ in $X^\circ=X\backslash B^4$ bounded by $K$. If a knot $K$ has $g_X(K)=0$, $K$ is said to be $X$-slice. The set of all $X$-slice knots is denoted by $\mathcal{S}(X)$. When $X=S^4$, this yields exactly the classical 4-genus $g_4(K)$. Other well-studied 4-manifolds include $\C P^2 \# \overline{\C P}^2$ and $S^2\times S^2$ in which every knot is slice \cite{norman1969dehn}, $\C P^2$ where not every knot is slice but the genus can differ from the classical genus \cite{yasuhara1992slice}, and most recently the $K3$ surface where it has been shown that every knot with fewer than 21 crossing is slice \cite{marengon2022unknotting}. 

We generalize this notion of $X$-genus to the symmetric setting. In particular, we consider strongly invertible knots and 4-manifolds with particular types of involutions. A \textit{strongly invertible knot} $K$ is a knot in $S^3$ invariant under an orientation preserving involution $\tau$ of $S^3$ which fixes exactly two points of $K$. We will fix a standard involution $\tau$ of $S^3$ with fixed point set $S^1$ in Section \ref{sec: the --Genus}. This choice is unique up to conjugation. Given a closed smooth 4-manifold $X$ with involution $\tau$ and $\Sigma\subset fix(\tau)$, a closed surface, we consider strongly invertible knots in the boundary of $X_{\tau,\Sigma}^\circ=X\backslash B^4$, where the $B^4$ is taken to be a small $\tau$-equivariant neighborhood of a point in $\Sigma$. We then define the $(X,\tau,\Sigma)$-genus $\tilde{g}_{X,\tau,\Sigma}(K)$ of a strongly invertible knot $K$ to be the minimal genus of $\tau$-invariant properly embedded surfaces in $(X,\tau)$ bounded by $K$. We say $K$ is ($X,\tau,\Sigma)$-slice if $\tilde{g}_{X,\tau,\Sigma}(K)=0$ and define $\tilde{\mathcal{S}}(X,\tau,\Sigma)$ to be the set of $(X,\tau,\Sigma)$-slice knots. 

We may relax these definitions by considering the minimal genus over all components of $fix(\tau)$, or going even further and considering the minimum over all involutions $\tau$ of $X$ whose fixed point set is two-dimensional, giving us the $(X,\tau)$- and $\tilde{X}$-genus, respectively. Defining $\tilde{\mathcal{S}}(X,\tau)$ and $\tilde{S}(X)$ to be the set of $(X,\tau)$-slice and $\tilde{X}$-slice knots, we get the following immediate inclusions:

$$\mathcal{S}(X)\supseteq\tilde{\mathcal{S}}(X)\supseteq \tilde{\mathcal{S}}(X,\tau) \supseteq \tilde{\mathcal{S}}(X,\tau,\Sigma). $$

When $X=S^4$, the $\tilde{X}$-genus is exactly the equivariant 4-genus of strongly invertible knots which has been well studied \cite{boyle2022equivariant,dai2023equivariant,di2023new,miller2023strongly}. One important question in this study is whether the equivariant 4-genus of a strongly invertible knot depends on the extension of the involution. That is to say, if $\tau'$ is the standard rotation of $S^4$, is $\tS(S^4)=\tS(S^4,\tau')$?

In hopes of gaining further insight to this question, we study the equivalent question for other 4-manifolds. In particular, we prove a similar statement does not hold for $S^2\times S^2$.

\begin{theorem}\label{thm: different S2 sliceness}
    There exists involutions $\tau_1,\tau_2$ on $S^2\times S^2$ such that neither\\ $\tS(S^2\times S^2,\tau_1)$ nor $\tS(S^2\times S^2, \tau_2)$ is contained in the other. 
\end{theorem}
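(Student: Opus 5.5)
We will realize \autoref{thm: different S2 sliceness} with explicit involutions and one explicit knot, the figure $8$ knot $4_1$ with its two strong inversions $\rho_1,\rho_2$. The goal is to produce involutions $\tau_1,\tau_2$ of $S^2\times S^2$, each with a two-dimensional fixed component $\Sigma_i$, such that:
\begin{itemize}
\item $(4_1,\rho_1)\in\tS(S^2\times S^2,\tau_1)\setminus\tS(S^2\times S^2,\tau_2)$;
\item $(4_1,\rho_2)\in\tS(S^2\times S^2,\tau_2)\setminus\tS(S^2\times S^2,\tau_1)$.
\end{itemize}
Natural candidates are the following.
\begin{itemize}
\item $\tau_1=r\times\mathrm{id}$, with $r$ a rotation of $S^2$. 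Its fixed set is $\{N,S\}\times S^2$, two spheres of self-intersection $0$.
\item $\tau_2$ the factor swap $(x,y)\mapsto(y,x)$. Its fixed set is the diagonal, a sphere of self-intersection $2$.
\item Alternatively, $\tau_2$ could be complex conjugation on both factors, whose fixed set is a torus.
\end{itemize}
Which pairing of inversion and involution works is to be decided by the computations below.

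\textbf{Existence step.} This uses the equivariant version of the Marengon--Mihajlovi\`c tubing construction. First, I would find an equivariant band move, or a symmetric crossing change, on $(4_1,\rho_i)$ that turns it into a symmetric unlink or unknot. Next, I would realize the crossing change equivariantly by tubing the resulting disk into an invariant sphere $S\subset S^2\times S^2$; that sphere should be a copy of a factor, or of the diagonal, compatible with $\tau_i$. For this to work, the arc of $D\cap\Sigma_i$ must be tubed into $\Sigma_i$ itself, or into a sphere meeting $\Sigma_i$ in a circle, as $\tau_i$ dictates. Symmetric crossing changes come in two kinds: on the axis, or in an exchanged pair. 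They require respectively one invariant tube or a pair of swapped tubes. So I would check which kind of crossing change each inversion $\rho_i$ of $4_1$ admits, and match it to the involution whose invariant spheres have the right self-intersection and fixed-set behavior. I expect $\rho_1$ to admit an on-axis crossing change to the unknot. For $\rho_2$, I expect only a pair of exchanged crossing changes, which needs the swap involution.

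\textbf{Obstruction step.} This is the main obstacle. Suppose $D$ is a $\tau$-invariant slice disk. Then $\tau|_D$ is a reflection-type involution of $D$ fixing an arc $a\subset\Sigma$ whose endpoints are the two fixed points of $K$. I would pass to the quotient $Q=(S^2\times S^2)^\circ/\tau$. For $\tau_1$ and $\tau_2$, this quotient can be identified explicitly:
\begin{itemize}
\item for the swap, $Q$ is $\C P^2$ minus a ball, branched along a conic;
\item for $r\times\mathrm{id}$, $Q$ is $S^2\times B^2$-type pieces.
\end{itemize}
In $Q$, the image of $D$ together with the image of $\Sigma$ gives a surface bounded by a constituent knot of the quotient theta-curve of $(K,\rho)$. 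This is the knot formed by $K/\rho$ with one half of the axis, and the choice of half is pinned down by where $a$ lands. Taking the double branched cover of $Q$ along this surface produces a smooth 4-manifold with known intersection form, bounded by the double branched cover of that constituent knot, or by its Dehn surgery.

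I would then apply Donaldson diagonalization, or a signature/$d$-invariant bound, to this manifold. The obstruction should succeed for one inversion and fail for the other, because the two inversions of $4_1$ have different quotient theta-curves, and hence different constituent knots with different signatures or $d$-invariants. Along the way, the homology class of $\Sigma_i$ in the quotient (self-intersection $0$ versus $2$) enters through the intersection form. That is exactly where the two involutions distinguish the two inversions.

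The delicate points are two: correctly computing the quotients and the branching data, and verifying that the resulting lattice embedding is genuinely impossible in the two excluded cases. If Donaldson does not suffice, my fallback is the $G$-signature theorem applied to the invariant disk.
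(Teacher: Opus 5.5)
\textbf{There is a genuine gap in the choice of involutions, and it cannot be fixed by doing the computations you postpone: none of your proposed pairings can work.} The two strong inversions of $4_1$ are mirror images of each other. The paper calls them $K$ and $mK$; their quotient knots are $\{T(2,5),U\}$ and $\{T(2,-5),U\}$. Suppose $\tau$ commutes with an orientation-reversing diffeomorphism of $X$ that preserves the fixed component used. Then $\tS(X,\tau)$ is closed under mirroring, so it contains both inversions or neither. Both of your non-swap candidates are of this kind:
\begin{itemize}
\item $r\times\mathrm{id}$ commutes with $\mathrm{id}\times s$, for $s$ a reflection;
\item $c\times c$ commutes with $\mathrm{id}\times c$.
\end{itemize}
So pairing either of them with the swap can never put $\rho_1$ in one set and $\rho_2$ only in the other.

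Independently, $r\times\mathrm{id}$ fails both of your steps. It acts trivially on $H_2$. Hence an invariant sphere on which $\tau$ acts by a reflection has $[S]=-[S]=0$, and a swapped pair has $S\cdot\tau(S)=S\cdot S\in 2\Z$. So there is no type $B_\pm$ or $C$ plumbing to tube into. Its quotient is again $S^2\times S^2$ (not ``$S^2\times B^2$-type pieces''), where every knot is slice, so the quotient obstruction is empty.

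Your expectation about the swap is also off. Its plumbing $S^2\times\{pt\}\cup\{pt\}\times S^2$ has a single double point on the diagonal with the two sheets exchanged. It therefore absorbs one on-axis type $B$ crossing change, not an exchanged type $A$ pair. A type $A$ pair would need two disjoint swapped plumbings, which is impossible in the rank-two unimodular $H_2(S^2\times S^2)$. Since the inversions are mirrors, they also admit the same kinds of equivariant unknotting, with signs reversed.

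\textbf{The missing idea is chirality.} The paper takes $\tau_1$ to be the swap and $\tau_2=\tau_1\circ(r\times r)$, with $r$ a reflection. Note that $\tau_2=\phi\tau_1\phi^{-1}$ for the orientation-reversing map $\phi=\mathrm{id}\times r$. Hence $(X,\tau_2)\cong\overline{(X,\tau_1)}$, and a knot $J$ lies in $\tS(X,\tau_2)$ if and only if $mJ$ lies in $\tS(X,\tau_1)$. (What separates the two involutions is this orientation reversal: the fixed sphere of $\tau_2$ is the graph of $r$, of square $-2$ rather than $+2$.) Everything then reduces to the swap.
\begin{itemize}
\item Existence: $S^2\times\{pt\}$ and $\{pt\}\times S^2$ form a type $B_+$ plumbing tree, and $mK$ is unknotted by one type $B_+$ crossing change. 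So Theorem~\ref{thm: tubing plumbings} makes $mK$ slice.
\item Obstruction: the quotient is $\mathrm{Sym}^2(S^2)=\C P^2$ and the constituent knot is $K^1=T(2,5)$. Yasuhara's theorem \cite{yasuhara1992slice} that $T(2,5)$ is not $\C P^2$-slice, combined with Lemma~\ref{lemma: quotient}, excludes $K$.
\end{itemize}
Your obstruction step does not reach this. You never identify the constituent knot. Your double branched cover of $Q$ along the quotient disk need not exist, because the class of that disk in $H_2(\C P^2)\cong\Z$ is uncontrolled and may be odd. What is needed is an obstruction to sliceness in $(\C P^2)^\circ$ that holds in every homology class, and that is exactly what Yasuhara's result supplies.
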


In particular, this immediately implies the following corollary:

\begin{corollary}
    There exists an involution $\tau$ on $S^2\times S^2$ such that: $$\tilde{\mathcal{S}}(S^2\times S^2,\tau)\subsetneq \tS(S^2\times S^2)$$
\end{corollary}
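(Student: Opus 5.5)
The statement to prove is the Corollary: there is an involution $\tau$ on $S^2\times S^2$ with $\tilde{\mathcal{S}}(S^2\times S^2,\tau)\subsetneq \tS(S^2\times S^2)$. I will deduce it formally from Theorem \ref{thm: different S2 sliceness} together with the inclusions $\tS(X)\supseteq\tS(X,\tau)$ recorded in the introduction.

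Let $\tau_1,\tau_2$ be the involutions provided by Theorem \ref{thm: different S2 sliceness}, and set $\tau=\tau_1$. The inclusion $\tS(S^2\times S^2,\tau_1)\subseteq\tS(S^2\times S^2)$ holds by definition. Indeed, $\tS(S^2\times S^2)$ is the set of strongly invertible knots that bound an invariant disk for some involution with two-dimensional fixed set. The theorem's involutions are of this type, since the $(X,\tau)$-genus is only defined for such $\tau$.

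It remains to show the inclusion is strict. By the theorem, $\tS(S^2\times S^2,\tau_2)\not\subseteq\tS(S^2\times S^2,\tau_1)$, so there is a knot $K\in\tS(S^2\times S^2,\tau_2)\setminus\tS(S^2\times S^2,\tau_1)$. Because $\tau_2$ is also an admissible involution, $\tS(S^2\times S^2,\tau_2)\subseteq\tS(S^2\times S^2)$, and hence $K\in\tS(S^2\times S^2)$. But $K\notin\tS(S^2\times S^2,\tau_1)$, so the inclusion is proper.

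The argument is purely formal. The only point needing care is that both involutions from the theorem have two-dimensional fixed point sets, so that both sets are subsets of $\tS(S^2\times S^2)$. Only half of the theorem's non-containment is used; symmetrically, $\tau=\tau_2$ would also work.
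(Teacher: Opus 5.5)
Your proof is correct and is the same argument as the paper's. The paper simply says the corollary is immediately implied by Theorem \ref{thm: different S2 sliceness}, and your deduction spells out that implication: take a knot in $\tilde{\mathcal{S}}(S^2\times S^2,\tau_2)\setminus\tilde{\mathcal{S}}(S^2\times S^2,\tau_1)$ and use the inclusion $\tilde{\mathcal{S}}(S^2\times S^2,\tau_2)\subseteq\tilde{\mathcal{S}}(S^2\times S^2)$.
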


We prove these results by studying invariant plumbing trees and equivariant unknotting numbers. This method comes from an equivariant modification of Marengon and Mihajlovi{\'c}'s techniques using plumbing trees, with which they showed that every knot with unknotting number less then 21 is slice in $K3$ \cite{marengon2022unknotting}. Their methods utilize information about unknotting numbers of knots and is described in detail in Section \ref{sec: the --Genus}. To move this work to the equivariant setting we use Boyle and Chen's work on equivariant unknotting numbers of strongly invertible knots \cite{nouh2009genera} which divides symmetric crossing changes into three types, type $A$, $B$, and $C$, described in Section \ref{sec: Symmetric Plumbings}. We define symmetric structures on plumbing trees (type $B_+, B_-$ and $C$ plumbing trees), guaranteeing the existence of invariant slice disks, given the following are met by both the knot and the 4-manifold. We will use $\Omega$ to refer to an arbitrary element of $\{B_+,B_-,C\}$.

\begin{theorem} \label{thm: tubing plumbings}
    Let $S=S_1\cup\dots \cup S_n$ be a type $\Omega$ plumbing tree in $(X,\tau)$ with fixed vertex $v\in \Sigma$, a component of $fix(\tau)$. Then a strongly invertible knot $K$ is $(X,\tau,\Sigma)$-slice if it bounds a $\tau$-invariant immersed disk $D$ in $B^4$ satisfying:
    \begin{enumerate}
        \item $D$ has $k\leq n-1$ self-intersections,
        \item $D$ has a single type $\Omega$ self-intersection,
        \item the remaining $n-2$ self intersections of $D$ are type $A$.
    \end{enumerate}
\end{theorem}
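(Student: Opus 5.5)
The plan is to run Marengon and Mihajlovi\'c's tubing construction equivariantly, using the symmetry built into a type $\Omega$ plumbing tree. The idea is to cancel each self-intersection of $D$ by tubing $D$ into a sphere of the plumbing tree, so that the result is an invariant embedded disk in $X^\circ_{\tau,\Sigma}$.

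First we set up the ambient picture. Take the small $\tau$-equivariant ball $B^4$ centred at a point of $\Sigma$; we may choose this point to be the fixed vertex $v$ of the plumbing tree, or a point near it in $\Sigma$. We view $X^\circ_{\tau,\Sigma}$ as the union of a collar $S^3\times I$, which contains the immersed disk $D$ (pushed into the collar), and the complement, which contains the plumbing tree $S$. The spheres $S_i$ are smoothly embedded, with prescribed self-intersection numbers, and $\tau$ acts on $S$ in the manner dictated by type $\Omega$.

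Next we tube off the type $A$ self-intersections. These come in $\tau$-exchanged pairs; equivalently, a single symmetric type $A$ crossing change corresponds to a pair of crossings swapped by $\tau$. At each such self-intersection point $p$ we remove a small disk from one sheet and tube it, along an arc, to a meridian-type sphere. Following Marengon--Mihajlovi\'c, we use the fact that tubing a sheet to the sphere $S_i$, and running the other sheet around the plumbing point of $S_i$ with a neighbouring sphere, removes the intersection at the cost of a sign determined by the framing. Since the type $A$ points are exchanged by $\tau$, we perform the construction at $p$ and copy it via $\tau$ to $\tau(p)$, using the $\tau$-exchanged spheres $S_i$ and $\tau(S_i)$ of the tree. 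The guiding arcs are chosen disjoint from $\mathrm{fix}(\tau)$ and from their $\tau$-images. The resulting surface is then invariant and has two fewer self-intersections. The count $k\le n-1$ guarantees that there are enough spheres: each self-intersection consumes one edge of the tree.

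Then we tube off the single type $\Omega$ self-intersection. This point is $\tau$-fixed, or lies on the fixed axis of the diagram; $\tau$ acts on the two sheets at it either by swapping them (types $B_\pm$) or by preserving each (type $C$). We use the fixed vertex $v\in\Sigma$, with the sphere $S_v$ invariant under $\tau$. The tube is run along an invariant arc inside $\mathrm{fix}(\tau)$ to a point of $S_v\cap\Sigma$, and the sign $\pm$ in $B_\pm$ (respectively the condition for $C$) is matched to the self-intersection number of $S_v$ and to the way $\tau$ acts on its normal bundle. In this way the equivariant tube-and-twist is well defined. The result is a $\tau$-invariant embedded disk; it has genus $0$ because tubing to spheres does not change genus, and its boundary is $K$.

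I expect the main obstacle to be this last step. We must check that the local model near the fixed self-intersection, with its prescribed action on the two sheets, can be equivariantly matched with the local model of $\tau$ on the invariant sphere at $v$ near $\Sigma$. It is also necessary to ensure that the tubes stay disjoint from one another, and that the resulting surface is smooth, so that no new fixed singularities arise along $\mathrm{fix}(\tau)$. My first approach would be to write explicit linear models for $\tau$ on $\mathbb{C}^2$ at the self-intersection point and at $v$, and to verify case by case ($B_+$, $B_-$, $C$) that the definition of a type $\Omega$ plumbing tree is exactly the compatibility needed for the tube to be invariant.
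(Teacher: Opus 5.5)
Your proposal has a genuine gap in its basic move. You cancel each self-intersection of $D$ by tubing \emph{one sheet} of $D$ into a sphere $S_i$, with a sign governed by the framing of $S_i$. This is not the Marengon--Mihajlovi\'c mechanism, and it is not available here. Once $D$ is pushed into the collar it is disjoint from $S$, so no $S_i$ is a geometric dual to a sheet of $D$. Deleting a small disk of one sheet at $p$ leaves a meridian of the other sheet, and nothing in $S$ caps it off.

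The paper instead cancels double points of $D$ against \emph{double points} of $S$. The link of a plumbing point $S_i\cap S_j$ is again a Hopf link. After joining a ball around a double point of $D$ to a ball around a plumbing point by a tube, the split link $L\sqcup\overline{L}$ bounds the annuli $L\times I$. So both sheets of $D$ are tubed at once, one to $S_i$ and one to $S_j$, and the framings of the $S_i$ never enter.

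Even read this way, your one-at-a-time scheme cannot reach $k\le n-1$. Each separate cancellation deletes four disks and glues in two annuli. Absorbing $k+1$ spheres through $k$ separate cancellations therefore gives $\chi=1+2(k+1)-4k=3-2k$, i.e.\ genus $k-1$. Concretely, once $S_j$ is absorbed, the next plumbing point $S_j\cap S_l$ is an intersection of the \emph{new} surface with $S_l$, and cancelling against it adds a handle. Using disjoint pairs of spheres only gives $k\le n/2$.

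The missing idea is to perform all $k$ cancellations simultaneously. Take a type $\Omega$ locally bipartitioned tree with $k$ vertices that equivariantly suitably embeds both in $S$ (Lemma~\ref{lemma: existence in plumbings (eq)}, pruning $\tau$-pairs of type $A$ vertices) and in $D$ (Lemma~\ref{lemma: closed surfaces}). By Lemma~\ref{lemma: link from embedding (eq)}, the links of the two neighbourhoods are the associated strongly invertible link and its mirror. A single application of Lemma~\ref{lemma: eq tubing} then replaces both neighbourhoods by $k+1$ invariant annuli. The preimage of the tree in the domain of $D$ is a forest with $k+1$ components, so $\chi(D\setminus N)=-k$. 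The $k+1$ punctured spheres contribute $k+1$, so the result is a disk.

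Your handling of the fixed self-intersection fails for a related reason. In a type $B_\pm$ plumbing tree, the fixed vertex is the plumbing point $S_i\cap\tau(S_i)$ of two spheres \emph{exchanged} by $\tau$, so there is no invariant sphere $S_v$ to tube into. At a type $B$ double point of $D$ the two sheets are also exchanged, so tubing a single sheet can never be equivariant. The fixed double point of $D$ must instead be cancelled against the fixed plumbing point of $S$; this is what the weight of the fixed vertex of the tree records.

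What has to be matched is the local strongly invertible Hopf link at one with the mirror of the one at the other. It is not a self-intersection number or an action on a normal bundle. For $B_\pm$ this reverses the sign, because mirroring changes the sign of $\mathrm{lk}(H_1,\tau H_1)=\pm1$ and so exchanges the $B_+$ and $B_-$ Hopf links. Compare the type $-\Omega$ in the restatement in Section~\ref{sec: examples}.

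Invariance of the annuli then comes for free. $\tau$ acts on the model $B^3\times I$ as rotation times the identity, so $L\times I$ is invariant and the fixed set is a disk. This is why the connecting arc is taken inside the single component $\Sigma$. Finally, the removed ball must be centred at a point of $\Sigma$ away from $S$, not at $v$; otherwise you destroy the plumbing point you need.
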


In practice, this result creates slice disks for strongly invertible knots in 4-manifolds so long as there is a symmetric plumbing tree of spheres in the 4-manifold which has a structure suitably related to a sequence of symmetric crossing changes in an equivariant unknotting of the knot. In particular, a short argument shows that while $S^2\times S^2$ may not be robust enough for every knot to be equivariantly slice, $3(S^2\times S^2)$ is. 

\begin{corollary}\label{cor: S^2timesS^2}
    There exists an involution $\tau$ on $3(S^2\times S^2)$ such that $\tilde{g}_{X,\tau}(K)=0$ for every strongly invertible knot $K$.
\end{corollary}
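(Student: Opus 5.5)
Since $\tilde{g}_{X,\tau}$ is a minimum over components $\Sigma$ of $fix(\tau)$, the plan is to exhibit one involution $\tau$ on $3(S^2\times S^2)$ and, for each strongly invertible knot $K$, one component $\Sigma$ together with an invariant immersed disk of $K$ in $B^4$ satisfying the hypotheses of Theorem \ref{thm: tubing plumbings}. The immersed disks come from equivariant unknotting. By Boyle and Chen's analysis, any strongly invertible $K$ is transformed into the unknot by a sequence of symmetric crossing changes of types $A$, $B$, $C$. The trace of such a sequence, capped off by the invariant disk of the unknot, is a $\tau$-invariant immersed disk $D\subset B^4$. Each type $A$ change, a symmetric pair of crossings swapped by $\tau$, contributes a pair of double points exchanged by $\tau$. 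Each type $B$ or $C$ change, a single crossing on the axis, contributes one double point on the fixed set, of sign $\pm$ and type $B_\pm$ or $C$.

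The first step is to reduce to a bounded number of fixed self-intersections. I would show that one may always unknot $K$ using at most one crossing change of type $B$ or $C$, the rest being type $A$. To do this, convert extra on-axis crossing changes into off-axis pairs, for instance by an equivariant finger move or by pairing two type-$B$ changes of opposite sign into a type-$A$-like configuration. If this fails in general, I would instead allow $D$ to have at most one fixed double point of each of the types $B_+,B_-,C$. I would then use the three summands of $3(S^2\times S^2)$, one adapted to each type $\Omega$. This is presumably why three copies appear.

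The second step is to construct $\tau$ so that $3(S^2\times S^2)$ contains type $\Omega$ plumbing trees with arbitrarily many vertices. Copies of $S^2\times\{pt\}$ and $\{pt\}\times S^2$ meeting transversally once give a plumbing of two spheres, and tubing parallel copies produces chains. I would choose a product involution on each summand, such as a reflection on one factor times rotation on the other, so that a chosen sphere is fixed and the others are permuted in pairs or preserved. The equivariant connected sum is taken at fixed points so that the trees for the three types share a common fixed component $\Sigma$. The number of vertices $n$ must exceed the number of self-intersections of $D$. I expect to get arbitrarily long trees from a bounded number of spheres, since Theorem \ref{thm: tubing plumbings} only requires $k\le n-1$. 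Paired type $A$ double points can be tubed onto a $\tau$-swapped pair of spheres, or repeatedly onto the same pair of spheres using parallel invariant copies, since $S^2\times S^2$ has trivial self-intersection classes available.

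Finally, apply Theorem \ref{thm: tubing plumbings}, or its evident iterate when several fixed double points are present, to conclude that $K$ is $(X,\tau,\Sigma)$-slice, so $\tilde{g}_{X,\tau}(K)=0$. The main obstacle is the bookkeeping in the first two steps: verifying that a single involution supports type $B_+$, $B_-$ and $C$ trees simultaneously, with enough type-$A$ vertex pairs. Here I would check that the framing/sign conditions defining each type $\Omega$ tree are realized by the $\pm$ behaviour of $\tau$ on the normal bundle of the fixed sphere in each summand.
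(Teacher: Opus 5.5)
\textbf{There is a genuine gap, in your first step.} Theorem~\ref{thm: tubing plumbings} needs an invariant immersed disk with \emph{exactly one} fixed double point, whose type $\Omega$ matches a plumbing tree that $X$ actually contains, and with all other double points of type $A$.

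\begin{itemize}
\item Your target ``at most one crossing change of type $B$ or $C$'' does not deliver this. If there are zero on-axis changes, there is no fixed vertex, and a type $\Omega$ tree must have one.
\item Leaving the type unspecified forces you to build $B_+$, $B_-$ and $C$ trees, each with arbitrarily many type $A$ pairs, all on one fixed component $\Sigma$. You correctly flag this as the main obstacle, but you do not resolve it.
\item The suggested mechanisms for the reduction (equivariant finger moves, pairing opposite type $B$ changes) are not arguments.
\item The fallback ``evident iterate'' of the theorem for several fixed double points is not supplied by the machinery here. Type $\Omega$ locally bipartitioned trees have a single fixed vertex by definition, and the paper explicitly avoids multiple fixed vertices.
\item Your sample involution, reflection on one factor times a $\pi$-rotation on the other, cannot be used. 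It is orientation-reversing with fixed set two circles. So it cannot restrict to the standard $\tau$ on the boundary $S^3$, and it provides no fixed surface $\Sigma$.
\end{itemize}

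\textbf{The missing idea is to choose the fixed double point rather than try to eliminate extra ones.}
\begin{itemize}
\item First perform a single type $C$ crossing change on $K$, obtaining $K'$. This is always possible after sliding the two fixed points of $K$ together along a half-axis.
\item Then use Boyle--Chen: $K'$ can be equivariantly unknotted by $n$ type $A$ moves alone.
\item The trace is a $\tau$-invariant immersed disk with exactly one type $C$ double point and $2n$ type $A$ double points, for every $K$.
\end{itemize}
Now only type $C$ trees are needed. The paper takes $\tau$ on $3(S^2\times S^2)$ to swap the first two summands and act by $r\times r$ on the third.
\begin{itemize}
\item The swapped summands supply $\tau$-swapped pairs of spheres, hence type $A$ vertices.
\item In the third summand, $S^2\times\{p\}$ and $\{p'\}\times S^2$, with $p,p'$ on the equator, are each invariant and meet in a type $C$ point.
\end{itemize}
This yields type $C$ plumbing trees $S_n$ of $2n+2$ spheres for every $n$. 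Since $k=2n+1\le (2n+2)-1$, Theorem~\ref{thm: tubing plumbings} applies directly, with no need for simultaneous $B_\pm$ trees or an iterated version of the theorem.
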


\subsection*{Acknowledgments} The author would like to thank Dr. Kristen Hendricks for introducing him to this topic and for numerous helpful conversations as well as feedback on earlier versions of the paper. 
    
\section{The $(X,\tau)$-Genus}\label{sec: the --Genus}

Given a closed smooth 4-manifold $X$, let $X^\circ=X\backslash B^4$. Then the $X$-genus of a knot in $S^3$ is defined as follows:

\begin{definition}
    Given a closed smooth 4-manifold $X$, the \textit{$X$-genus} of a knot $K\subset S^3$, denoted $g_X(K)$, is the minimal genus of a properly embedded smooth surface $\Sigma \subset X^\circ$ with $\partial\Sigma=K$. If $g_X(K)=0$, we say $K$ is $X$-slice and let $\mathcal{S}(X)=\{K\subset S^3\, : \, K \text{ is } X\text{-slice}\}$.
\end{definition}

One can see that the $S^4$-genus of a knot is its 4-genus, i.e. $g_{S^4}(K)=g_4(K)$. Additionally, any knot which is slice in $B^4$ can be isotoped to be slice in a neighborhood of the boundary. This means that $\mathcal{S}(S^4)\subseteq\mathcal{S}(X)$ for all 4-manifolds $X$. It is also well-known that every knot is slice in $S^2\times S^2$ and $\C P^2 \# \overline{\C P}^2$. That is to say, $\{Knots\}=\mathcal{S}(S^2\times S^2)=\mathcal{S}(\C P^2 \# \overline{\C P}^2)$ \cite{norman1969dehn}. There are some 4-manifolds $X$ that have sets of slice knots falling somewhere between, such as $\C P^2$ which has $\mathcal{S}(S^4)\subsetneq\mathcal{S}(\C P^2) \subsetneq \mathcal{S}(S^2\times S^2)$. One interesting 4-manifold in this regard is the $K3$ surface. As mentioned in the introduction, it was shown in Theorem 1.1 of \cite{marengon2022unknotting} that every knot with 21 or fewer crossings is slice in $K3$. However, at this time not much more is known about the landscape of slice knots in $K3$. The techniques of \cite{marengon2022unknotting} will be reviewed in Section \ref{sec: Symmetric Plumbings}, as they serve as a backbone for some of our main results.

Our focus now is to define and provide initial results for an equivariant version of the $X$-genus of strongly invertible knots.

For the remainder of this paper we fix an involution $\tau$ on $S^3$. Specifically, we identify $S^3$ with the one point compactification of $\R^3$, and define $\tau$ to be the extension of the $\pi$-rotation of $\R^3$ about the $z$-axis. We will later refer to involutions on 4-manifolds by $\tau$ as well, understanding that the involution on the 4-manifold restricts to the involution $\tau$ on some specific $S^3$ in the 4-manifold, namely a boundary component of the 4-manifold or the 4-manifold minus a ball.

We know recall the definition of strongly invertible knots, some elementary constructions, and the equivariant 4-genus.

\begin{definition}
    A \textit{strongly invertible knot} is a knot $K\subset S^3$ invariant under $\tau$ such that $K\cap fix(\tau)=\{p_1,p_2\}$, two distinct points.
\end{definition}

\begin{remark}
    It is common in the literature to write $(K,\tau)$ for a strongly invertible knot. However as we have fixed $\tau$, we simply refer to a strongly invertible knot as $K$. It is important, then, to remember that two ambient isotopic classical knots, $K_1$ and $K_2$, might define distinct strongly invertible knots. 
\end{remark}

\noindent We consider strongly invertible knots up to equivariant ambient isotopy. For a strongly invertible knot $K$, we recall a few key constructions and refer the reader to the following paper of Sakuma for a thorough treatment of foundational results for strongly invertible knots \cite{sakuma1986strongly}. We recall how to construct two important classical knots associated with a given strongly invertible knot. Given a strongly invertible knot $K$, we get two \textit{half-axes}, which are the two arcs in $fix(\tau)$ with endpoints on $K$. From these half-axes, we are able to construct two quotient knots associated with $K$. These are the knots $K^i=(K\cup h_i)/ \tau$. See Figure \ref{fig: quotient knots} for the knots $K^i$ associated with the figure eight knot, which are both constructions which will be used consistently in examples and proofs throughout. 

\begin{figure}
    \centering
    \includegraphics[height=0.25\linewidth]{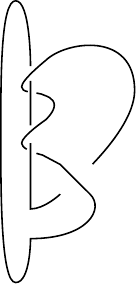}\quad \quad \quad 
    \includegraphics[height=0.25\linewidth]{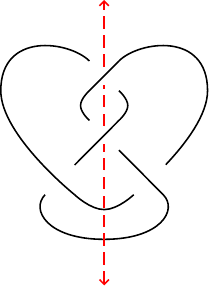}\quad \quad \quad 
    \includegraphics[height=0.25\linewidth]{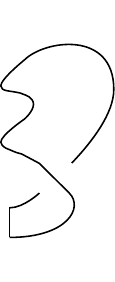}
    \caption{The figure 8 knot $K$ with strong inversion $\tau$ (middle) and its quotients $K^1=T(2,5)$ (left) and $K^2=\text{unknot}$ (right).}
    \label{fig: quotient knots}
\end{figure}

We will make use of equivariant concordance of strongly invertible links in later sections, so we clarify our definition of strongly invertible link here, as it differs somewhat from other sources. Some sources require that each component of the link is, ignoring other components, a strongly invertible knot. We use a less strict definition:

\begin{definition}
    A \textit{strongly invertible link } is an oriented link $L\subset S^3$ invariant under $\tau$, such that each component $L_1$ of $L$ is either a strongly invertible knot or $\tau(L_1)=rL_2$, where $L_2$ is another component of $L$.
\end{definition}

As with strongly invertible knots, we consider strongly invertible links up to equivariant ambient isotopy of $(S^3,\tau)$. For equivariant tubing constructions in later sections, we also need to consider equivariant concordance of strongly invertible links.

\begin{definition}
    Two $n$ component strongly invertible links $L_0$ and $L_1$ are \textit{equivariantly concordant} if there exists a link concordance $f$ from $L_0$ to $L_1$ commuting with $\tau$, the involution on $S^3\times I$, which acts trivially on the $I$ component and as $\tau$ on the $S^3$ component.
\end{definition}

We are now ready to discuss 4-dimensional properties of our strongly invertible knots.

\begin{definition}
    The \textit{equivariant 4-genus} of a strongly invertible knot $K$, denoted $g_{4}(K)$, is the minimal genus of a properly embedded smooth surface $\Sigma\subset B^4$ with boundary $K$ which is invariant under some extension of $\tau$ to $B^4$. If $g_4(K)=0$, we say $K$ is \textit{equivariantly slice}.
\end{definition}

The equivariant 4-genus is well-studied, with many interesting properties and applications \cite{boyle2022equivariant,dai2023equivariant,di2023new,miller2023strongly}. We will discuss a number of properties of this genus as they arise in relation to similar properties in the more general setting we now discuss. 

We now consider closed 4-manifolds $X$ which are equipped with an involution $\tau$ such that $fix(\tau)$ contains some compact surface $\Sigma$. For such 4-manifolds, we may remove a $\tau$-invariant neighborhood of a point in some component $\Sigma$ of the fixed point set. The resulting 4-manifold, which we denote $X^\circ_{\tau,\Sigma}$, is then a 4-manifold with boundary $S^3$ which is invariant under an involution $\tau$. We may then identify the boundary with our standard model of $S^3$ so that $\tau$ restricts to our predetermined action $\tau$ on $S^3$. If $fix(\tau)$ is connected, we will often omit $\Sigma$, instead writing $X^\circ_\tau$. This construction is determined uniquely by $X$, $\tau$, and $\Sigma$, making $X^\circ_{\tau,\Sigma}$ well defined. 
Because of this, we can define the \textit{$(X,\tau,\Sigma)$-genus} of a strongly invertible knot $K$ as follows:

\begin{definition}
    Given a smooth closed 4-manifold $X$ with involution $\tau$ and $\Sigma\subset\text{fix}(\tau)$ a closed surface, the \textit{$(X,\tau,\Sigma)$-genus} of a strongly invertible knot $K\subset S^3$, denoted $\tilde{g}_{X,\tau,\Sigma}(K)$, is the minimal genus of a properly embedded $\tau$-invariant smooth surface $\Sigma_K \subset X^\circ_{\tau,\Sigma}$ with $\partial\Sigma=K$. If $\tilde{g}_{X,\tau,\Sigma}(K)=0$, we say $K$ is $(X,\tau,\Sigma)$-slice and let $\tilde{\mathcal{S}}(X,\tau,\Sigma)=\{\text{Strongly invertible knots }K : \, K \text{ is } (X,\tau,\Sigma)\text{-slice}\}$
\end{definition}

Because $X$ may have multiple involutions and some involutions may have multiple components in the fixed point set, the specification of $\tau$ and $\Sigma$ is necessary. That said, in some situations we may have a connected fixed point set, not care what component we consider, or not care what involution we consider. For this reason, we define two additional variations of the $(X,\tau,\Sigma)$-genus. The first variant ignores choice of component of the fixed point set:

\begin{figure}
    \centering
    \subfigure{\includegraphics[height=0.24\textwidth]{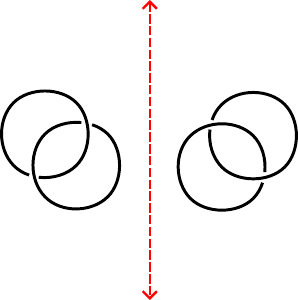}}
    \hspace{4em}\subfigure{\includegraphics[height=0.24\textwidth]{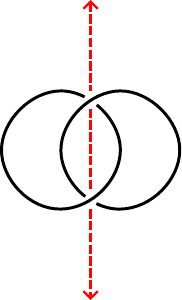}} 
    \subfigure{\includegraphics[height=0.24\textwidth]{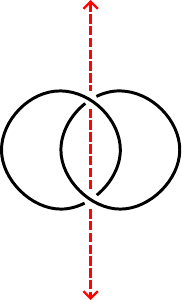}}
    \hspace{6em}\subfigure{\includegraphics[height=0.24\textwidth]{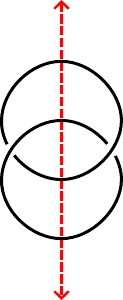}}
    \caption{Associated Hopf links to vertices of type $A$ (left), type $B_+$ (middle left), type $B_-$ (middle right), and type $C$ (right).}
    \label{fig:hopf links}
\end{figure}

\begin{figure}
    \centering
    \subfigure{\includegraphics[height=0.2\textwidth]{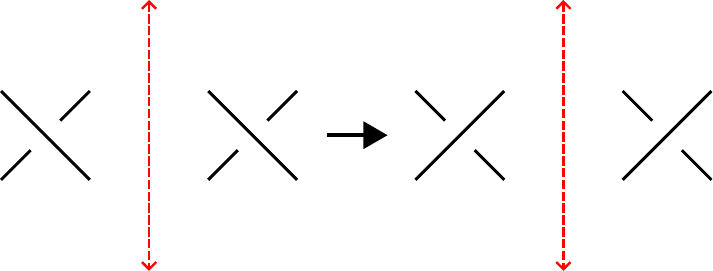}}\\
    \subfigure{\includegraphics[height=0.2\textwidth]{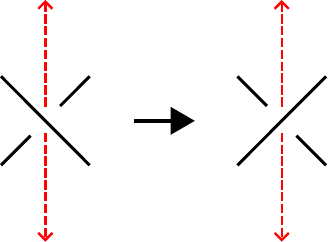}} 
    \hspace{6em}\subfigure{\includegraphics[height=0.2\textwidth]{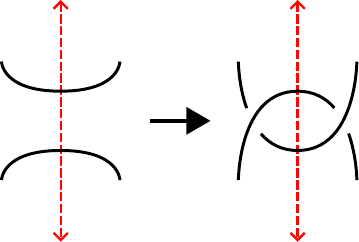}}
    \caption{Crossing changes of type $A$ (top), type $B$ (bottom left), and type $C$ (bottom right)}
    \label{fig:foobar}
\end{figure}

\begin{definition}\label{def: X,tau}
    Given a smooth closed 4-manifold $X$ with involution $\tau$, the \textit{$(X,\tau)$-genus} of a strongly invertible knot $K\subset S^3$, denoted $\tilde{g}_{X,\tau}(K)$, is defined as:
    $$\tilde{g}_{X,\tau}(K) = \text{min}\{\tilde{g}_{X,\tau,\Sigma}(K) \, | \, \Sigma \subset \text{fix}(\tau)\text{ is a closed surface}\}$$
    
\end{definition}

If we wish to ignore choice of involution, we then get the following genus:

\begin{definition}\label{def: X}
    Given a smooth closed 4-manifold $X$, the \textit{equivariant $X$-genus} of a strongly invertible knot $K\subset S^3$, denoted $\tilde{g}_{X}(K)$, is defined as:
    $$\tilde{g}_{X}(K) = \text{min}\{\tilde{g}_{X,\tau}(K) \, | \, \tau \text{ is an involution of $X$}\}$$
\end{definition}

\begin{definition}
    A knot is $(X,\tau)$\textit{-slice}, respectively $\tilde{X}$\textit{-slice}, if $\tilde{g}_{X,\tau}(K)$, respectively $\tilde{g}_{X}(K)$, is equal to 0. We denote the set of all $(X,\tau)$-slice and $\tilde{X}$-slice knots by $\tilde{\mathcal{S}}(X,\tau)$ and $\tilde{\mathcal{S}}(X)$ respectively.
\end{definition}

By definition we get the following chain of inequalities:

$$g_X(K)\leq\tilde{g}_X(K)\leq\tilde{g}_{X,\tau}(K)\leq \tilde{g}_{X,\tau,\Sigma}(K) $$

\noindent which implies:

$$\mathcal{S}(X)\supseteq\tilde{\mathcal{S}}(X)\supseteq \tilde{\mathcal{S}}(X,\tau) \supseteq \tilde{\mathcal{S}}(X,\tau,\Sigma). $$

When $X=S^4$, we get the following example:

\begin{example}\label{eg: fig8 no slice}
    Let $X=S^4$ and $\tau$ be an involution of $S^4$ with two-dimensional fixed point set. Then, fix$(\tau)$ is an $S^2$, possibly knotted. Then we have that $\tilde{X}^\times_{\tau}$ is the 4-ball with involution $\tau$, which may or may not have a knotted fixed point set. Recall that the standard equivariant 4-genus is defined to be the minimum over all extensions of the involution on $S^3$ to $B^4$. This means that we have the following equivalence:
    $$\tilde{g}_4(K) = \tilde{g}_{S^4}(K).$$
    \noindent As mentioned in the introduction, it is not currently known if the minimal genus depends on the extension of $\tau$. Framed in our language of $X$-genera, this is equivalent to the following question:

    \begin{question}
        Does there exist a strongly invertible knot $K$ and an involution $\tau$ on $B^4$ such that $\tilde{g}_{S^4}(K) <\tilde{g}_{S^4,\tau}(K)$, i.e. $\tilde{\mathcal{S}}(S^4)\neq\tilde{\mathcal{S}}(S^4,\tau)$?
    \end{question}
\end{example}

\section{Initial Constructions}

We will now begin to show that there do exist 4-manifolds where the choice of involution affects the slice genera. In particular, we will prove there exists an involution $\tau$ on $S^2\times S^2$ such that $\tilde{\mathcal{S}}(S^2\times S^2)\neq\tilde{\mathcal{S}}(S^2\times S^2,\tau)$. First, we highlight a simple yet effective obstruction to being $(X,\tau,\Sigma)$-slice. To do so, we first highlight a key obstruction:

\begin{lemma}\label{lemma: quotient}
    If $X$ is a smooth 4-manifold with involution $\tau$ such that $\Sigma\subset fix(\tau)$ is a 2-sphere and $K$ is a strongly invertible knot with $g_{X,\tau,\Sigma}=0$, then, letting $Y=X/\tau$,  we have $g_{Y}(K^i)=0$ for $i=0,1$.
\end{lemma}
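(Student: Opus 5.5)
Take an invariant slice disk $D\subset X^\circ_{\tau,\Sigma}$ with $\partial D=K$ and push it down to the quotient. Since $\Sigma$ is a 2-dimensional component of $fix(\tau)$ and $\tau$ is an orientation-preserving involution (it must be, to restrict to the rotation $\tau$ on $S^3$), the branched double cover $X\to Y=X/\tau$ makes $Y$ a smooth 4-manifold with branch locus the image of $fix(\tau)$. The removed ball $B^4$ is a $\tau$-invariant neighborhood of a point of $\Sigma$; its quotient is again a ball, $B^4/\tau\cong B^4$, with boundary $S^3/\tau\cong S^3$, the standard quotient of the rotation about an unknotted circle. So $Y^\circ:=X^\circ_{\tau,\Sigma}/\tau$ is $Y$ minus a ball, and the boundary quotient map is the one used to define $K^i$. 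If $fix(\tau)$ has isolated fixed points, I would either note that they do not meet $D$ after a small equivariant perturbation, or observe that the hypotheses force a 2-dimensional fixed set near $D$.

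Next, analyze $D\cap fix(\tau)$. By the Smith-type argument standard for strongly invertible knots, $\tau|_D$ is an involution of the disk fixing exactly the two points $p_1,p_2\in\partial D$. It is therefore conjugate to a reflection, with fixed set an arc $\alpha\subset D$ joining $p_1$ to $p_2$. The arc $\alpha$ lies in $fix(\tau)\cap X^\circ_{\tau,\Sigma}$; it starts and ends on the circle $fix(\tau)\cap S^3$, so it lies in the component $\Sigma^\circ=\Sigma\setminus B^4$, which is a disk because $\Sigma$ is a sphere. The arc $\alpha$ cuts $D$ into two half-disks $D_1,D_2$ swapped by $\tau$. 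Hence $D/\tau\cong D_1$ is a smoothly embedded disk in $Y^\circ$ (embedded since $D\setminus\alpha$ is free and $D_1\cap\tau(D_1)=\alpha$). Its boundary is $(K/\tau)\cup\alpha$, an arc in $S^3/\tau$ together with the arc $\alpha$ in the branch disk $\Sigma^\circ/\tau\cong\Sigma^\circ$.

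Finally, convert this into slice disks for $K^1$ and $K^2$. The fixed circle in $S^3$ is $h_1\cup h_2$ and bounds the disk $\Sigma^\circ$, and $\alpha$ splits $\Sigma^\circ$ into two half-disks $E_1,E_2$ with $\partial E_i=h_i\cup\alpha$. Gluing $E_i$ to $D_1$ along $\alpha$ gives a disk with boundary $(K/\tau)\cup h_i=K^i$. Equivalently, one can isotope $\alpha$ across $E_i$ rel endpoints onto $h_i$, pushing slightly into the interior. This disk meets itself only along $\alpha$, where $D_1$ and $E_i$ meet at an angle, so smoothing the corner yields a properly embedded smooth disk in $Y^\circ$ bounded by $K^i$. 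Thus $g_Y(K^i)=0$.

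The main obstacle is this last step, together with the claim that $D_1$ and $E_i$ meet only along $\alpha$. We have $D\cap\Sigma^\circ\subseteq D\cap fix(\tau)=\alpha$, so the interiors are disjoint. What remains is a careful local model near $\alpha$ showing that the union is a topological disk that can be smoothed equivariantly. I would first work in a slice normal to $\alpha$, where $\tau$ is rotation by $\pi$ on $\mathbb{R}^2\times\mathbb{R}^2$ fixing $\Sigma$. In that model $D$ is a line through the rotation axis and $\Sigma$ is a plane, and in the quotient the half-line and the half-plane meet transversally in the expected way.
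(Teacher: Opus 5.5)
Your proposal is correct and follows the paper's proof. The paper likewise takes the fixed arc $\gamma=D\cap\Sigma$, lets $D_i\subset\Sigma^\circ$ be the half-disk bounded by $\gamma\cup h_i$, and observes that $(D\cup D_i)/\tau$ is a disk in $Y$ bounded by $K^i$. Your extra justifications fill in steps the paper only asserts: that $fix(\tau|_D)$ is a single arc (so $D$ meets no other fixed component), that $D/\tau$ is embedded, and the local model for smoothing the corner along $\alpha$.
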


\begin{proof}
    Let $K$ be a strongly invertible knot in $\tilde{X}^\circ_{\tau,\Sigma}$ and let $D$ be a smooth properly embedded $\tau$-invariant disk bounded by $K$. As $\Sigma$ restricted to $\tilde{X}^\circ_{\tau,\Sigma}$ is a disk, we have $D\cap\Sigma$ is a properly embedded arc $\gamma$, with boundary points $p_1,p_2\in K$. Additionally, recall that $p_1$ and $p_2$ separate the fixed point set of $S^3$ into two half-axis, which we call $h_1$ and $h_2$, such that $\gamma \cup h_i$ bounds a disk  $D_i$ for both $h_i$. Then, considering the quotient $\tilde{X}^\circ_{\tau,\Sigma}/\tau$ we have that $(D\cup D_i)/\tau$, is a properly embedded disk in $Y$ with boundary $K^i$.
\end{proof}

The restriction of this lemma to $S^4$ is a well-known obstruction to being equivariantly slice. In this more general setting, it allows us to immediately obstruct knots from being equivariantly slice in $S^2\times S^2$ paired with a particular involution:

\begin{lemma}\label{lemma: fig8 not slice}
    There exists an involution $\tau$ on $S^2\times S^2$ such that: $$\tilde{\mathcal{S}}(S^2\times S^2,\tau)\subsetneq \mathcal{S}(S^2\times S^2)$$
\end{lemma}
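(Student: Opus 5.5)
Since every knot is slice in $S^2\times S^2$, we have $\mathcal{S}(S^2\times S^2)=\{\text{Knots}\}$. It therefore suffices to exhibit an involution $\tau$ on $S^2\times S^2$, with a fixed $2$-sphere $\Sigma$, and a single strongly invertible knot $K$ that is not $(S^2\times S^2,\tau)$-slice. The natural choice is the figure eight knot with the strong inversion of Figure \ref{fig: quotient knots}, whose quotient $K^1$ is $T(2,5)$. The obstruction will be Lemma \ref{lemma: quotient}: if $K$ were slice, then $T(2,5)$ would be slice in the quotient $Y=X/\tau$. So we want an involution whose quotient is a manifold in which $T(2,5)$ is not slice.

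For the involution I would take $\tau=\rho\times\mathrm{id}$, where $\rho$ is the $\pi$-rotation of the first $S^2$ factor about an axis. Its fixed set is $\{N,S\}\times S^2$, two disjoint $2$-spheres, so Definition \ref{def: X,tau} requires handling both components; by the symmetry swapping the poles they are equivalent. The quotient $S^2/\rho$ is a sphere with branch points at the poles, so $Y=S^2\times S^2$ again, with branch locus two parallel copies of $\{pt\}\times S^2$. The branched double cover of $S^2\times S^2$ along $2[\{pt\}\times S^2]$ is indeed $S^2\times S^2$, which confirms the identification. I would then check that the quotient of the removed invariant ball is a ball, so that $Y^\circ$ has $S^3$ boundary and Lemma \ref{lemma: quotient} applies verbatim. (The lemma's proof only uses that $D\cap\Sigma$ is an arc, which holds here.)

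The remaining problem is that in $Y=S^2\times S^2$ every knot is slice, so this choice gives no obstruction. This is the crux: the quotient must be a manifold where $T(2,5)$ is not slice, such as $S^4$ or $\mathbb{C}P^2$ with suitable orientation. I would therefore instead choose $\tau$ on $S^2\times S^2$ with $Y\cong S^4$ and fixed set a single $2$-sphere. One candidate is the involution $(x,y)\mapsto(y,x)$. Its fixed set is the diagonal, with self-intersection $2$, and its quotient is $\mathbb{C}P^2$ (the symmetric product of $S^2$). By Lemma \ref{lemma: quotient}, $T(2,5)$ or its mirror would then have to be slice in $\mathbb{C}P^2$ with the appropriate orientation. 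The mirror (or, for the other half-axis choice, the unknot) is harmless, so the orientation and the choice of which quotient knot to use must be tracked carefully.

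To finish, I would obstruct $T(2,5)$ (or its mirror) from bounding a disk in punctured $\mathbb{C}P^2$. The plan is to use the known fact that a knot slice in $\mathbb{C}P^2$ (respectively $\overline{\mathbb{C}P}^2$) satisfies a signature or $\tau$/$\nu^+$-type bound. Concretely: for a disk in $\overline{\mathbb{C}P}^2{}^\circ$ representing class $d$, the Ozsváth–Szabó inequality gives $\tau(K)\le 0$ when $d=0$ and more generally constrains $\tau$. For $K$ in $\mathbb{C}P^2$, one applies the same bound to the reversed orientation with $-K$. Since $\tau(T(2,5))=2$ and its signature is $-4$, I expect one orientation choice to be obstructed by the Ozsváth–Szabó $\tau$ adjunction bound combined with the Levine–Tristram signature bound for homology classes $d$. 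The main obstacle is the step of ruling out the odd-degree classes $d$, in particular $d=\pm1,\pm3$, where $T(2,5)$ might actually bound disks. I would check these against Yasuhara's $\mathbb{C}P^2$-sliceness obstructions \cite{yasuhara1992slice}. If they fail, I would instead search for an involution whose quotient is $S^4$, so that the obstruction reduces to $g_4(T(2,5))=2\neq0$.
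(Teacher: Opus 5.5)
Your reduction is the paper's: the swap $(x,y)\mapsto(y,x)$, whose fixed set is the diagonal sphere and whose quotient is $\C P^2=\mathrm{Sym}^2(\C P^1)$, combined with Lemma~\ref{lemma: quotient} for a strongly invertible figure eight with quotient $T(2,5)$. The gap is that you never prove the one step with content: that this quotient knot bounds no disk in $\C P^2\setminus B^4$ in \emph{any} class $d$. You only say you expect it. The paper closes exactly this step by citing Yasuhara's theorem that $T(2,5)$ is not $\C P^2$-slice.

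The tools you list would not finish it on their own. For the chirality that is actually obstructed, the Ozsv\'ath--Szab\'o (or $\nu^+$) bound says nothing. Reversing orientation turns the question into one about the mirror knot in the negative definite $\overline{\C P}^2$, where the invariant is $-2$. That bound only obstructs the other chirality, and only for $|d|\le 2$. For $d=\pm1$ no prime power divides $d$, so there is no Levine--Tristram bound either.

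A complete classical argument needs the Arf invariant. For odd, hence characteristic, $d$ the generalized Rokhlin theorem gives $1-d^2\equiv 8\,\mathrm{Arf}(K)\pmod{16}$. Since $\mathrm{Arf}(T(2,5))=1$, this rules out $d\equiv\pm1\pmod 8$. The ordinary signature rules out all even $d$. Levine--Tristram signatures at $e^{2\pi i r/m}$, for a prime power $m$ dividing $d$, rule out the remaining odd $d$. Among all $d$, only $d=\pm3$ (at $e^{2\pi i/3}$) obstructs one chirality but not the other.

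The orientation bookkeeping you flag is settled by the freedom in choosing the knot. Mirroring by a reflection of $S^3$ that commutes with $\tau$ gives another strong inversion of $4_1$, necessarily inequivalent to yours, whose quotients are the unknot and $T(2,-5)$. So you may pick the inversion whose nontrivial quotient is the chirality that is not slice in $\C P^2$ with the orientation the quotient inherits.

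Drop the fallback. An involution of $S^2\times S^2$ with quotient $S^4$ and fixed set a single $2$-sphere cannot exist, because $\chi(X)=2\chi(Y)-\chi(\mathrm{Fix})$ forces $\chi(\mathrm{Fix})=0$.
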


\begin{proof}
    Consider $X=S^2\times S^2$ with the involution $\tau(x,y)=(y,x)$, i.e. the involution which swaps factors. We then have that fix$(\tau)$ is $S^2$, and therefore connected. This means the choice of component of the fixed point is determined and we may consider knots in $\partial \tilde{X}^\circ_\tau$. As mentioned in the introduction, the non-equivariant genus of any knot in $X^\circ$ is always 0, meaning we must have $\tilde{\mathcal{S}}(S^2\times S^2,\tau)\subseteq \mathcal{S}(S^2\times S^2)$. We now show they are not equal.
    Let $K=4_1$, the figure eight knot, with involution shown in Figure \ref{fig:fig8} (left). Considering the quotient, we see that $K^1=T(2,5)$. If $g_{X,\tau}(K)=0$, by Lemma \ref{lemma: quotient}, we would then have that $K^1$ is $\C P^2$-slice, as the quotient of the action on the total space is $\C P^2$. However, its been shown by Yasuhara that this is not the case \cite{yasuhara1992slice}. Thus, $K$ is not in $\tilde{\mathcal{S}}(X,\tau)$.
\end{proof}
\begin{figure}
    \centering
    \includegraphics[width=0.7\linewidth]{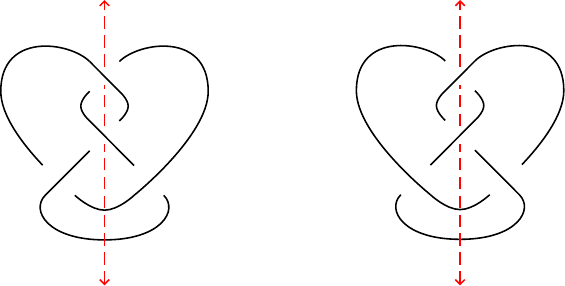}
    \caption{Two different involutions on the figure 8 knot.}
    \label{fig:fig8}
\end{figure}

We will show in Section \ref{sec: examples} that, under a different involution, the figure eight knot is equivariantly slice in $S^2\times S^2$.

\section{The Equivariant Plumbing Construction}

In this section, we will recall Marengon and Mihajlovi\'c's techniques for studying $X$-genus of knots via embeddings of plumbings \cite{marengon2022unknotting}. In recalling their constructions, we will simultaneously extend them to the symmetric setting.

\subsection{An Equivariant Tubing Construction}

The following lemma of \cite{marengon2022unknotting} is a tubing operation which generalizes the ``Norman trick" \cite{norman1969dehn} to more general families of links.

\begin{lemma}[\cite{marengon2022unknotting}, Lemma 3.1] \label{lemma: tubing}
    Let $\Sigma_1,\Sigma_2\subset X$ be two smooth normally immersed surfaces surfaces in a smooth connected 4-manifold $X$. Let $B_1$ and $B_2$ be two disjoint 4-balls with boundaries $S_1$ and $S_2$, respectively, such that $B_1\cap \Sigma_2=B_2\cap \Sigma_1 = \emptyset$. If the links $L_1:=\Sigma_1\cap S_1$ and $L_2:=\Sigma_2\cap S_2$ are mirrors of each other, we can eliminate all the self-intersections of $\Sigma_1\cup \Sigma_2$ in $B_1\cup B_2$ and build a new normally immersed surface $\Sigma\subset X$ by ``tubing", i.e. connecting $\Sigma_1$ and $\Sigma_2$ via disjoint annuli. 
\end{lemma}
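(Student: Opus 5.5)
The plan is to prove this the way Norman's trick is proved, by working one pair of mirror-image pieces at a time.

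\emph{Step 1: local models.} I will arrange the self-intersections of $\Sigma_1\cup\Sigma_2$ so that those inside each ball $B_j$ are all concentrated near a single point. After a small isotopy supported in $B_1\cup B_2$, I will assume that $\Sigma_j\cap B_j$ is the cone over $L_j\subset S_j$. This is reasonable because the intersection is transverse to $S_j$ and the balls can be shrunk or radially reparametrized. So the part of $\Sigma_1\cup\Sigma_2$ inside $B_1\cup B_2$ consists of two cones $c(L_1)$ and $c(L_2)$. All the relevant self-intersections sit at the two cone points, and everything else lies in $X\setminus(B_1\cup B_2)$.

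\emph{Step 2: remove the cones and join the spheres by a tube.} Delete the interiors of $B_1$ and $B_2$; this leaves a surface with boundary $L_1\sqcup L_2$. Choose an embedded arc $\alpha$ in $X$ from $S_1$ to $S_2$ that is disjoint from $\Sigma_1\cup\Sigma_2$. Such an arc exists because $X$ is connected and has dimension 4, so a generic arc misses a 2-dimensional surface. A neighbourhood of $B_1\cup\alpha\cup B_2$ is a 4-ball $B$, and its boundary is the connected sum $S_1\# S_2$. Identify $S_2$ with $S_1$ via an orientation-reversing diffeomorphism taking $L_2$ to $\overline{L_1}$; this is possible because the two links are mirrors. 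With the orientations induced as boundary of the complement, $L_1\sqcup L_2$ then appears in $\partial(S^3\times I)\cong S_1\sqcup \overline{S_2}$ as $L_1\times\{0\}\sqcup L_1\times\{1\}$. Concretely, $\nu(S_1\cup\alpha\cup S_2)$ with the balls removed contains a copy of $S^3\times I$ running from $S_1$ to $S_2$ along $\alpha$. Put the product annuli $L_1\times I$ inside it. These annuli are disjoint from one another, they are embedded, and they meet the rest of the surface only along $L_1\sqcup L_2$.

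\emph{Step 3: smooth and check.} The new surface is $\Sigma=(\Sigma_1\cup\Sigma_2)\setminus(B_1\cup B_2)$ together with the $|L_1|$ disjoint annuli. Smooth the corners using collars. It is normally immersed, since outside the balls nothing has changed. Inside $B_1\cup B_2$ it has no self-intersections at all, since we only added the embedded annuli. Because $\alpha$ avoids the surfaces, the construction introduces no new intersection points.

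The main obstacle is Step 2: realizing a copy of $S^3\times I$ connecting $S_1$ to $S_2$ in a way that respects the mirror identification. The balls are disjoint, so one cannot simply take a collar. To handle this I will use $\nu(\alpha)$ to form $X\setminus(B_1\cup B_2)\supset (S_1\# S_2)$-type region. More precisely, I will isotope $B_2$ along $\alpha$ until it lies in a collar of $S_1$ outside $B_1$, carrying $\Sigma_2\cap B_2$ along. The region between $S_1$ and the translated $S_2$ is then a genuine $S^3\times I$, and the annuli $L_1\times I$ sit inside it. The orientation bookkeeping is the point where mirroring enters: the boundary orientation on $S_2$ is reversed relative to $S_1$, and this reversal is exactly what the hypothesis $L_2=\overline{L_1}$ compensates.
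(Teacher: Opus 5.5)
\textbf{There is a genuine gap, in Step 2.} Your setup matches the paper's, which follows Marengon--Mihajlovi\'c: an arc $\alpha$ from $S_1$ to $S_2$ missing $\Sigma_1\cup\Sigma_2$, and the 4-ball $B=B_1\cup\nu(\alpha)\cup B_2$ with $\partial B=S_1\# S_2$. The failure is exactly where you flagged the obstacle: in general there is no copy of $S^3\times I$ in $X$ cobounded by $S_1$ and $S_2$.

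\begin{itemize}
\item The region $\nu(S_1\cup\alpha\cup S_2)\setminus(\mathrm{int}B_1\cup\mathrm{int}B_2)$ is the boundary connected sum $(S^3\times I)\natural(S^3\times I)$. It has three boundary spheres, $S_1$, $S_2$ and $S_1\# S_2$, so it is not a product from $S_1$ to $S_2$.
\item Any product region with boundary exactly $S_1\sqcup S_2$ would have to be all of $X\setminus(\mathrm{int}B_1\cup\mathrm{int}B_2)$, which forces $X\cong S^4$.
\item Dragging $B_2$ into a collar of $S_1$ does not help. As long as $B_2$ stays disjoint from $B_1$, the collar minus $B_2$ again has three boundary spheres. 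A genuine product between $S_1$ and $S_2$ would need nested balls, contradicting $B_1\cap B_2=\emptyset$ and $B_2\cap\Sigma_1=\emptyset$.
\item Annuli placed outside $B_1\cup B_2$ would also run into the collars $L_j\times I$ of $\Sigma_j$ that already sit there.
\end{itemize}

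\textbf{The missing idea is that the annuli should live inside the ball $B$.} Since $\nu(\alpha)$ misses the surfaces, $(\Sigma_1\cup\Sigma_2)\cap\partial B$ is the split link $L_1\sqcup L_2=L_1\sqcup\overline{L_1}$ in $\partial B\cong S^3$, with $S_j$ oriented as $\partial B_j$.

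Write $B\cong B^3\times[0,1]$ with $L_1\subset\mathrm{int}(B^3)$. Under the product identification with $B^3$, the faces $B^3\times\{0\}$ and $B^3\times\{1\}$ of $\partial B$ inherit opposite orientations. So the disjoint product annuli $L_1\times[0,1]\subset B$ have boundary exactly the split link $L_1\sqcup\overline{L_1}$. This is Norman's trick, and it is the only place the mirror hypothesis is used.

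Now replace $(\Sigma_1\cup\Sigma_2)\cap B=(\Sigma_1\cap B_1)\cup(\Sigma_2\cap B_2)$ by these annuli and smooth the corners. This is what the paper does with its ball $B_3$.

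\textbf{Drop Step 1 as well.} In general an isotopy cannot turn $\Sigma_j\cap B_j$ into a cone: cones over nontrivial links are not smooth, and the topology and double points of $\Sigma_j\cap B_j$ are preserved by isotopy. It is also unnecessary, since those pieces are discarded.
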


We extend this to an equivariant tubing construction for strongly invertible surfaces.

\begin{lemma} \label{lemma: eq tubing}
    Let $\Sigma_1,\Sigma_2\subset X$ be two equivariantly smooth normally immersed surfaces in a smooth connected 4-manifold $X$ with involution $\tau$. Let $B_1$ and $B_2$ be two disjoint $\tau$-invariant 4-balls with boundaries $S_1$ and $S_2$, respectively, such that $B_1\cap \Sigma_2=B_2\cap \Sigma_1 = \emptyset$ and $fix(\tau|_{B_1})$ and $fix(\tau|_{B_2})$ are disks in the same component of $fix(\tau)$. If the links $L_1:=\Sigma_1\cap S_1$ and $L_2:=\Sigma_1\cap S_2$ are mirrors of each-other (as strongly invertible knots), we can eliminate all the self-intersections of $\Sigma_1\cup \Sigma_2$ in $B_1\cup B_2$ and build a new equivariantly normally immersed surface $\Sigma\subset X$ by equivariantly ``tubing", i.e. connecting $\Sigma_1$ and $\Sigma_2$ via disjoint symmetric annuli. 
\end{lemma}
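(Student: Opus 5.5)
We follow the proof of Lemma \ref{lemma: tubing} and make each step $\tau$-equivariant. (We read the hypothesis as $L_2:=\Sigma_2\cap S_2$, as in Lemma \ref{lemma: tubing}.) Non-equivariantly, one removes $\Sigma_i\cap B_i$ and identifies $(B_1,L_1)$ with the standard ball containing the cone on $L_1$. Using the mirror identification $(S_2,L_2)\cong(\overline{S_1},\overline{L_1})$, one then connects $S_1$ to $S_2$ by a tube $S^3\times I$ running along an arc $\gamma$ from $B_1$ to $B_2$. The annuli are $L_1\times I$ inside a thickening of this tube. The new surface is $(\Sigma_1\setminus B_1)\cup(\Sigma_2\setminus B_2)\cup (L_1\times I)$. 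It is normally immersed, has no double points in $B_1\cup B_2$, and its remaining double points are those of $\Sigma_1\cup\Sigma_2$ outside the balls.

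The equivariant version has three steps.

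First, we choose the path $\gamma$ inside the fixed surface. By hypothesis, $fix(\tau|_{B_1})$ and $fix(\tau|_{B_2})$ are disks in the same component $F$ of $fix(\tau)$. We pick points $x_i\in fix(\tau|_{B_i})$ on the boundary spheres and an embedded arc $\gamma\subset F$ joining them. We may choose $\gamma$ to avoid $\Sigma_1\cup\Sigma_2$: invariant surfaces meet $F$ generically in arcs and points, so by general position inside $F$ (a 2-manifold) we push $\gamma$ off the 1-dimensional set $(\Sigma_1\cup\Sigma_2)\cap F$. If that set separates $F$, we instead route $\gamma$ through the complement in a way that keeps it off the surfaces. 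This is where connectivity of $F$ is used.

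Second, we build a $\tau$-invariant tubular neighborhood $N(\gamma)\cong \gamma\times D^3$. Since $\gamma$ is pointwise fixed, the linearized action on the normal $D^3$ is a linear involution whose fixed set is a line, namely the normal direction of $\gamma$ inside $F$. An equivariant tubular neighborhood theorem gives $N(\gamma)\cong\gamma\times D^3$ with $\tau$ acting as the $\pi$-rotation on $D^3$. Then $B_1\cup N(\gamma)\cup B_2$ is an invariant 4-ball, and its complement of $\operatorname{int}(B_1\cup B_2)$ is an invariant $S^3\times I$. On each slice $S^3\times\{t\}$ the action is conjugate to our standard $\tau$: it restricts to $\tau$ on both ends, which are standard because each $fix(\tau|_{B_i})$ is an unknotted disk.

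Third, we make the equivariant identification. We need an equivariant diffeomorphism of this $S^3\times I$ to the product $(S^3\times I,\tau\times\mathrm{id})$ that carries $L_1\subset S_1$ to the strongly invertible mirror of $L_2\subset S_2$. This is exactly the hypothesis that $L_1$ and $L_2$ are mirrors as strongly invertible links. Reversing orientation on one end of $S^3\times I$ turns the mirror into the same strongly invertible link, so the product annuli $L_1\times I$ form an invariant family. Gluing them in gives the invariant surface $\Sigma$, and smoothing corners equivariantly preserves invariance.

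The main obstacle is the third step. We must show that the equivariant collar from $S_1$ through $N(\gamma)$ to $S_2$ realizes the \emph{given} equivariant mirror identification, rather than one twisted by a nontrivial element of the equivariant mapping class group of $(S^3,\tau)$. Our plan is to use that this group is generated by isotopies together with the $\pi$-rotation, which commutes with $\tau$. Then any discrepancy can be absorbed by an equivariant isotopy supported in a collar of $S_2$, or by swapping the two half-axes. Swapping the half-axes is permitted because the strong-inversion mirror relation is symmetric under it.
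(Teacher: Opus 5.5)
Your outline is the same as the paper's proof: an arc $\gamma$ in the fixed surface $F$, an invariant tubular neighbourhood, the invariant ball $B_3=B_1\cup N(\gamma)\cup B_2$, and symmetric annuli coming from the mirror hypothesis. The paper simply asserts that $\gamma$ and the annuli exist. The arguments you add for these two points do not work.

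\textbf{Step 1.} The set $(\Sigma_1\cup\Sigma_2)\cap F$ is a union of fixed arcs and circles of the restricted involutions. It has codimension one in $F$, so general position inside $F$ only makes $\gamma$ cross it transversely. Suppose these curves separate $\partial B_1\cap F$ from $\partial B_2\cap F$; for instance, $\Sigma_1$ could contain a reflection-invariant sphere whose fixed circle is an equator of $F\cong S^2$. Then no arc can be routed around them, and connectedness of $F$ does not help. Nor can $\gamma$ leave $F$: it must be pointwise fixed for its invariant neighbourhood to be a single $1$-handle from $B_1$ to $B_2$. So the existence of $\gamma$ is an extra hypothesis, not a consequence of the stated ones. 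What you need is that $\partial B_1\cap F$ and $\partial B_2\cap F$ can be joined in $F\setminus(\Sigma_1\cup\Sigma_2\cup\mathrm{int}\,B_1\cup\mathrm{int}\,B_2)$.

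\textbf{Steps 2 and 3.} These rest on a wrong picture. The region $(B_1\cup N(\gamma)\cup B_2)\setminus\mathrm{int}(B_1\cup B_2)$ is essentially $N(\gamma)\cong\gamma\times D^3$, not $S^3\times I$. The annuli actually live in $B_3=B_1\natural B_2\cong B^3\times I$. They join $L_1\subset S_1\setminus\nu(x_1)$ to $L_2\subset S_2\setminus\nu(x_2)$, where $x_i=\gamma\cap S_i$. So what must be mirrored are strongly invertible links in $3$-balls, marked by the exit points of $\gamma$.

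With this corrected, the obstacle you flag is real, and your resolution fails in two ways.
\begin{enumerate}
\item The nontrivial class of the equivariant mapping class group of $(S^3,\tau)$, represented by a $\pi$-rotation about an axis perpendicular to $\mathrm{fix}(\tau)$, reverses the fixed circle. It is therefore not equivariantly isotopic to the identity and cannot be absorbed in a collar.
\item Half-axes cannot be swapped at will. Which half-axis of $L_i$ contains $x_i$ is constrained by the components of $F\setminus(\Sigma_1\cup\Sigma_2)$ through which $\gamma$ can run, and no collar modification changes this.
\end{enumerate}

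Concretely, take a knot component and let $R_0\subset F\cap B_3$ be the strip between the two fixed arcs of the invariant annulus $A$. Then $(A\cup R_0)/\tau$ is an annulus in the $4$-ball $B_3/\tau$. It joins the quotient of $L_1$ along its half-axis missing $x_1$ to the quotient of $L_2$ along its half-axis missing $x_2$. Now take the figure-eight knot, whose two quotients are $T(2,5)$ and the unknot, paired with its mirror but with mismatched exit points. The annulus would give a concordance between the unknot and $T(2,5)$, up to mirror and reversal, which is impossible. So being mirrors as strongly invertible knots is not enough.

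What is needed is an orientation-reversing equivariant diffeomorphism $(S_1,L_1)\to(S_2,L_2)$ that agrees near $x_1$ with the identification induced by the product structure on $B_3$. You also need $B_1,B_2$ to be equivariantly standard balls; your unknottedness remark assumes this, but the hypotheses do not give it. Under these hypotheses the product annuli $L_1\times I$ exist and nothing needs to be absorbed.
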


\begin{proof}
    We proceed as in \cite[Proof of Lemma 3.1]{marengon2022unknotting}. Let $\gamma\subset X\backslash (B_1\cup B_2 \cup \Sigma_1 \cup \Sigma_2)$ be an arc connecting $S_1$ and $S_1$. In particular, choose $\gamma$ to be an arc in $fix(\tau)$. Such an arc is guaranteed to exist as we restricted $fix(\tau|_{B_1})$ and $fix(\tau|_{B_2})$ to be disks in the same component of $fix(\tau)$. Then, taking a small equivariant tubular neighborhood of $\gamma$, denoted $\tilde{\mathcal{N}}(\gamma)$, we may construct a new $\tau$-invariant 3-ball $B_3=B_1\cup B_2 \cup \tilde{\mathcal{N}}(\gamma)$. 

    Then, if $L_1$ is equivariantly concordant to $L_2$, there is a symmetric collection of disjoint annuli $A$ in $B_3$ connecting them. The surface obtained by taking $((\Sigma_1\cup\Sigma_2)\backslash (B_1\cup B_2))\cup A$ is then an equivariantly normally immersed surface with the intersections in $B_1$ and $B_2$ removed. 
\end{proof}

In order to utilize this tubing construction, we now turn our attention to specific 4-balls and links appearing via locally partitioned trees, a primary tool for our constructions and results. First, we recall the non-equivariant definitions of \cite{marengon2022unknotting}.

\subsection{Non-Equivariant Locally Bipartitioned Trees}
\begin{definition}
    A \textit{locally bipartitioned tree} $\big( T,\{\Pi_v\}_{v\in V(T)}\big)$ is given by:
    \begin{itemize}
        \item a finite tree $T$ and,
        \item for each vertex $v\in V(T)$, a set $\Pi_v=\{P_v,Q_v\}$ which gives a bipartition of $E(v)$, i.e. $E(V)=P_v\sqcup Q_v$.
    \end{itemize}
    For $v\in V(T)$ and $e\in E(v)$, we let $\pi_v(e)\in\Pi_v$ denote the element of the biparition containing the edge $e$.
\end{definition}

\begin{definition}
    Given a locally biparitioned tree $\big( T,\{\Pi_v\}\big)$, its \textit{associated link} is the unoriented link $L\big( T,\{\Pi_v\})$ in $S^3$ defined in two steps as follows:
    \begin{enumerate}
        \item for each vertex $v\in V(T)$, take a Hopf link with the two components labeled by the two elements of $\Pi_v$;
        \item for each edge $e\in E(T)$, connecting vertices $v$ and $w$, connect sum the two Hopf links associated with $v$ and $w$ at the components labeled $\pi_v(e)$ and $\pi_w(e)$.
    \end{enumerate}
\end{definition}

\begin{definition}
    Let $\big( T,\{\Pi_v\}\big)$ be a locally bipartitioned tree, $\Sigma^2\subset X^4$ be a normally immersed surface, and let $\mathcal{D}(\Sigma)$ denote the set of double points of $\Sigma$. A \textit{suitable embedding} of $\big( T,\{\Pi_v\}\big)$ into $\Sigma$ is an embedding $\funct{f}{T}{\Sigma}$ such that:
    \begin{enumerate}
        \item $f^{-1}(\mathcal{D}(\Sigma))=V(T)$, and
        \item for each vertext $v\in V(T)$, any two edges in $E(v)$ from the same element of the bipartition $\Pi_v$ map into the same local components of $\Sigma$, whereas any two edges from two different elements of $\Pi_v$ map into two different local components of $\Sigma$.
    \end{enumerate}
\end{definition}

\subsection{Equivariantly Locally Bipartitioned Trees}\label{sec: Symmetric Plumbings}

We now define variants of the previous definitions which additionally encode the necessary symmetric information. Due to a variety of restrictions that appear in later constructions, we necessarily add a number of conditions for the equivariant version of locally bipartitioned trees. We will explain the necessity of these additions shortly.


\begin{definition}
    A \textit{$\tau$-equivariantly locally bipartitioned tree} $(T,\{\Pi_v\},\tau,w)$ is a locally bipartitioned tree $(T,\{\Pi_v\})$ with $\tau$ an involution in $Aut(T)$ and $\funct{w}{V(T)}{\{A,B_+,B_-,C\}}$ is a weight function such that for $v \in V(T)$:
    \begin{enumerate}
        \item $\tau$ fixes a single vertex of $T$,
        \item If $w(v)=A$, then $v\neq \tau(v)$, $w(\tau(v))=A$,  and either
        \begin{enumerate}
        \item $\tau(P_v)=P_{\tau(v)}$ and therefore $\tau(Q_v)=Q_{\tau(v)}$ or,
        \item $\tau(P_v)=Q_{\tau(v)}$ and therefore $\tau(Q_v)=P_{\tau(v)}$.
        \end{enumerate}
        \item If $w(v)\in\{B_+,B_-\}$, then $v=\tau(v)$ and $\tau(P_v)=Q_v$.
        \item If $w(v)=C$, then $v=\tau(v)$ and $\tau(P_v)=P_v$.
    \end{enumerate}
\end{definition}

\begin{remark}
    One could define a more general definition of equivariant locally bipartitioned trees which allows for multiple type $C$ vertices. This definition, however, requires additional information to disambiguate future constructions, leading to unnecessary complexity while additionally being unnecessary for producing the results of interest. 
\end{remark}

Since each tree has only a single fixed vertex and that vertex has one of three weights, we get three specific types of trees which can appear. 

\begin{definition}
    For $ \Omega\in \{B_+,B_-,C\}$, a \textit{type} $\Omega$ \textit{locally bipartitioned tree} is a $\tau$-equivariantly locally bipartitioned tree which has a fixed vertex with weight $\Omega$. 
\end{definition}

The weights in this definition come from the labeling convention for types of crossing changes of strongly invertible knots, developed by \cite{boyle2024equivariant}. This relation will be discussed in detail shortly.


Given an equivariantly locally bipartitioned tree, one may construct its associated link as in the non-equivariant setting. While this construction is well-defined in the non-equivariant setting, the same association does not immediately result in a clear choice of strong inversion. This ambiguity is what necessitated the additional properties in the definition of a $\tau$-equivariantly locally bipartioned tree. For example, one can see that are two distinct strongly invertible Hopf links where the involution switches components, as seen in Figure \ref{fig:hopf links}. This is why, in the definition of $\tau$-equivariantly locally bipartitioned trees, we added extra information in the form of weights. Using this added information, we can define the associated strongly invertible link of an equivariantly locally bipartitioned tree.

\begin{definition}
    Given a type $\Omega$ locally bipartitioned tree $(T,\{\Pi_v\},\tau,w)$, its \textit{associated strongly invertible link} has the underlying structure of the associated link of the locally bipartitioned tree $(T,\{\Pi_v\})$, with involution as follows:
    \begin{enumerate}
        \item the Hopf link associated to the fixed vertex is embedded with symmetry determined by its weight as shown in Figure \ref{fig:hopf links},
        \item all other replacements of vertices with Hopf links and connect sums are performed equivariantly. 
    \end{enumerate}
\end{definition}

\begin{figure}
    \centering
    \includegraphics[width=.9\linewidth]{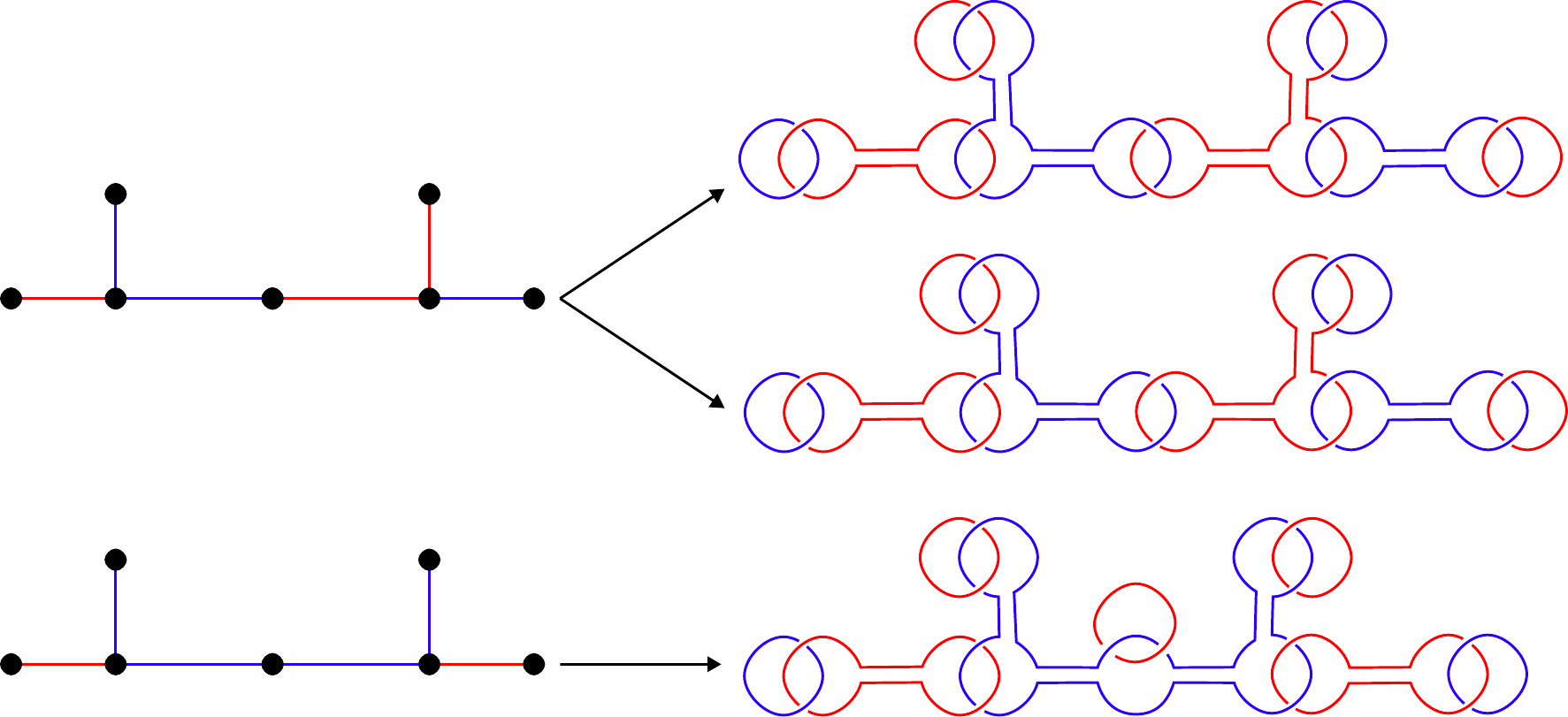}
    
    \caption{Two equivariantly locally biparitioned trees with bipartition given by a coloring of their edges (left) and their associated strongly invertible link (right). On top, the two resolutions refer to choice of $B_+$ (top) or $B_-$ (bottom) for the weight for the fixed vertex.}
    \label{fig:associate links}
\end{figure}

Figure \ref{fig:associate links} shows two associated strongly invertible trees with the same underlying graph structure but distinct symmetric structures coming from the three choices of weight for the fixed vertex.

Our goal now is to construct equivariant embeddings of $\tau$-equivariantly locally bipartitioned trees into immersed surfaces so that a neighborhood of the embedding intersects the surface in the associated strongly invertible link. To do this, we first recall definitions of types of invariant self-intersections. 

In \cite{boyle2024equivariant}, invariant self-intersections are classified as they appear in an equivariant regular homotopy resulting from symmetric crossing changes in the knot diagram. In particular, they define three types of self-intersections.

\begin{definition} Given an equivariant regular homotopy between strongly invertible knots, there are three types of transverse self-intersections which may appear:
\begin{enumerate}
    \item[(1)] A \textit{type A} self-intersection is away from the fixed-point set. Type A intersections come in pairs.
    \item[(2)] A \textit{type B} self-intersection is on the fixed-point set and is the image of two points exchanged by the symmetry. This arises, locally, as the transverse intersection of two disks interchanged by the action.
    \item[(3)] A \textit{type C} self-intersection is on the fixed-point set and is the image of two points fixed by the symmetry. This arises locally as the transverse intersection of two planes each set-wise fixed by the action. 
\end{enumerate}
\end{definition}

We add a further specification here, distinguishing two types of type $B$ self intersections. In particular, a neighborhood of a type $B$ self intersection is given by the transverse intersection of two planes which are swapped by the involution. Assigning an orientation to one plane then, by the symmetry, assigns an orientation to the second plane. If the intersection of the two planes is positive, we say the intersection is type $B_+$ and, similarly, if the intersection is negative, we say it is type $B_-$. 

With this terminology, we are now able to define an appropriate equivariant analog of suitable embeddings:

\begin{definition}
    Let $\tau$ be an involution on a smooth 4-manifold $X^4$, $\Sigma\subset X^4$ be a $\tau$-invariant normally immersed surface with double points $\mathcal{D}(\Sigma)$, and $(T,\{\Pi_v\},\rho,w)$ be a type $\Omega$ locally bipartitioned tree with fixed vertex $v$. A \textit{equivariantly suitable embedding} of $(T,\{\Pi_v\},\rho,w)$ into $\Sigma$ is an equivariant embedding
    $$\funct{f}{(T,\rho)}{(\Sigma,\tau|_\Sigma)}$$
    \noindent such that $f(v)$ is a type $\Omega$ self-intersection and, forgetting symmetry, $f$ is a suitable embedding of $(T,\{\Pi_v\})$ into $\Sigma$.
\end{definition}

By Lemma 3.7 of \cite{marengon2022unknotting}, the link of the equivariantly suitable embedding, i.e. the intersection of a neighborhood of the embedding with the surface, is the associated link (ignoring symmetry). We now show that it is also the associated strongly invertible link.

\begin{lemma}\label{lemma: link from embedding (eq)}
    Given $\Sigma$, a $\tau$-invariant normally immersed surface, and $f$, an equivariantly suitable embedding of an equivariantly locally bipartitioned tree \\$(T,\{\Pi_v\},\rho,w)$ into $\Sigma$, then the link of the embedding of $f$, i.e., the intersection of $\Sigma$ with the boundary of an equivariant neighborhood of $f(T)$, is the associated strongly invertible link of $(T,\{\Pi_v\},\rho,w)$.
\end{lemma}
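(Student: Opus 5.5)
We build on Lemma 3.7 of \cite{marengon2022unknotting}. Forgetting symmetry, it already identifies the link of $f$ with the associated link $L(T,\{\Pi_v\})$. What remains is to check that the involution $\tau$ on the boundary of an equivariant regular neighborhood $\tilde{\mathcal{N}}(f(T))$ matches the involution prescribed in the definition of the associated strongly invertible link. The plan is to reproduce the Marengon--Mihajlovi\'c decomposition of $\tilde{\mathcal{N}}(f(T))$ equivariantly:
\begin{itemize}
\item take small balls $B_u$ around each double point $f(u)$, for $u\in V(T)$;
\item take tubes (1-handles) $N(f(e))$ along the image arcs of the edges;
\item choose the balls and tubes $\tau$-invariantly, which is possible since $f$ is $\tau$-equivariant;
\item note that $\tilde{\mathcal{N}}(f(T))$ is the boundary connected sum of the balls $B_u$ along the tubes.
\end{itemize}
The link of $f$ is then obtained from the local links $\Sigma\cap\partial B_u$, each a Hopf link, by band/connected sums along the edge tubes. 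These are exactly the two steps of the definition of the associated link.

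First, consider the non-fixed vertices. Since $\tau$ fixes a single vertex $v$, every other vertex $u$ is paired with $\tau(u)\neq u$, and $B_{\tau(u)}=\tau(B_u)$. The Hopf link at $u$ is carried by $\tau$ onto the Hopf link at $\tau(u)$. Conditions (2a)/(2b) on $\Pi_u$ record which labelled component goes to which, and these agree with how $f$ maps local sheets, by suitability. Likewise each edge $e$ not containing $v$ is paired with $\tau(e)$, so the connected sums along $e$ and $\tau(e)$ are performed as a symmetric pair. This is precisely the instruction that ``all other replacements and connect sums are performed equivariantly.''

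Next, consider the fixed edges. An edge $e$ with $\tau(e)=e$ must contain the unique fixed vertex $v$, since a tree involution fixing an edge setwise either fixes both endpoints or swaps them. Swapping is impossible: it would force the fixed point set of $\tau$ on $T$ to be the midpoint of $e$, with no fixed vertex. So the only fixed vertex is $v$, and any fixed edge would have to fix its other endpoint too, contradicting uniqueness. Hence there are no invariant edges. Edges at $v$ are permuted in pairs, and the band sums at $v$ occur in symmetric pairs, so they do not interact with the fixed axis beyond what is determined by $B_v$.

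The remaining step, and the main obstacle, is identifying the strongly invertible Hopf link $(\partial B_v,\Sigma\cap\partial B_v,\tau)$ with the model in Figure \ref{fig:hopf links} for the weight $w(v)=\Omega$. By definition, $f(v)$ is a type $\Omega$ double point, so locally $\Sigma$ is two transverse planes through a point of $fix(\tau)$:
\begin{itemize}
\item for type $C$, each plane is setwise fixed;
\item for types $B_\pm$, the planes are swapped, and their intersection sign is $\pm$.
\end{itemize}
I would choose linear coordinates on $\mathbb{R}^4$ in which $\tau=\mathrm{diag}(1,1,-1,-1)$ and the planes are explicit: for $C$, $\{x_2=x_4=0\}$ and $\{x_1=x_3=0\}$; for $B_\pm$, planes spanned by $e_1\pm e_3$-type vectors interchanged by $\tau$. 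I would then intersect with $S^3$ and read off the Hopf link together with the rotation about the great circle $\{x_3=x_4=0\}$. Two compatibility checks are needed:
\begin{itemize}
\item type $C$ gives each component invariant, matching $\tau(P_v)=P_v$;
\item types $B_\pm$ swap the components, matching $\tau(P_v)=Q_v$.
\end{itemize}
Finally, the sign of intersection distinguishes the two component-swapping strongly invertible Hopf links. This follows because the linking number of the oriented boundary link equals the local intersection number, and the orientations are transported by $\tau$. Combining the three steps gives the equivariant identification.
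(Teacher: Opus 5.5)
Your proposal is correct and follows the same route as the paper. The paper's proof simply observes that the strongly invertible Hopf link attached to a vertex of weight $\Omega$ is the link of a type $\Omega$ double point, then defers to the non-equivariant argument of Marengon--Mihajlovi\'c; you additionally supply details it leaves implicit: the equivariant choice of balls and tubes, the fact that no edge of $T$ is invariant under the involution, and an explicit linear model showing type $C$ preserves each component while types $B_\pm$ swap them and are distinguished by the $\tau$-transported linking number.
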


\begin{proof}
    
In the definition of the associated strongly invertible link, we related to each node weight a specific strongly invertible Hopf link (or pair of Hopf links in the case of weight $A$ vertices). The associated Hopf link to a vertex of weight $\Omega$ is exactly the same as the link obtained from intersecting the boundary of the neighborhood of a type $\Omega$ intersection of an immersed surface with that surface. From here, the result follows exactly as in the non-equivariant setting.

\end{proof}

\subsection{Equivariant Suitable Embeddings in Plumbing Trees and Immersed Surfaces}

The goal now is to be able to equivariantly suitably embed equivariant locally bipartioned trees into a variety of immersed surfaces in a way which allows us to utilize Lemma \ref{lemma: eq tubing}. We begin with plumbing trees $\Sigma=S_1 \cup \cdots \cup S_n$ of spheres in $(X,\tau)$, a smooth 4-manifold with involution $\tau$, which are invariant under $\tau$. As an added restriction, we will consider only plumbing trees as above for which $\tau$ fixes only a single immersed point. We call such trees \textit{simple symmetric plumbing trees}. Note that a non-simple symmetric plumbing tree will always have a subtree which is simple. We may partition the set of all simple symmetric plumbing trees based on the type of self-intersection which appears at the immersed point. These self-intersections are either type $B_+$, $B_-$, or $C$. We then get the following definition:

\begin{definition}
    For $\Omega\in \{B_+,B_-,C\}$, a \textit{type }$\Omega$ \textit{plumbing tree} is a simple symmetric plumbing tree which has a fixed vertex of type $\Omega$.
\end{definition}

In Lemma 3.8 of \cite{marengon2022unknotting}, the authors prove that for any plumbing tree of $n$ spheres (or more generally surfaces), there exists a locally bipartitioned tree with $n-1$ vertices which suitably embeds into the plumbing tree. We now show that if we instead consider a type $\Omega$ plumbing tree with $n$ vertices, we can find an equivariantly locally bipartitioned tree with $n-1$ vertices which equivariantly suitably embeds. 

\begin{lemma}\label{lemma: existence in plumbings (eq)}
    Let $\Sigma=S_1\cup \dots \cup S_n$ be a type $\Omega$ plumbing tree in $(X,\tau)$. Then, there exists a type $\Omega$ locally bipartitioned $(T,\{\Pi_v\},\rho,w)$, with $|V(T)|=n-1$, which equivariantly suitably embeds into $\Sigma$.
\end{lemma}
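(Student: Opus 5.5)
The plan is to mimic the proof of Lemma 3.8 of \cite{marengon2022unknotting}, keeping track of the symmetry throughout. A plumbing tree $\Sigma=S_1\cup\dots\cup S_n$ has an underlying tree $G$ whose vertices are the spheres $S_i$ and whose edges are the $n-1$ plumbing points, i.e.\ the double points of $\Sigma$. The candidate tree $T$ is the \emph{line graph}-type tree whose vertices are these $n-1$ double points. Since $G$ is a tree, the following choice of $T$ is again a tree: for each sphere $S_i$, fix a star- or path-like connection of the double points lying on $S_i$. Each edge of $T$ is realized by an arc on the sphere $S_i$ joining two double points of $S_i$.

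At a vertex $p=S_i\cap S_j$ of $T$, the bipartition $\Pi_p$ records which incident edges of $T$ run along $S_i$ and which run along $S_j$. The resulting map $f\colon T\to\Sigma$ is then a suitable embedding, exactly as in the non-equivariant case, provided the arcs are chosen disjoint away from their endpoints. This holds because arcs on a single sphere can be chosen disjoint, and arcs on different spheres meet only at double points.

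To make everything equivariant, I would use that $\tau$ permutes the spheres and the double points, inducing an involution on $G$ and hence on the set of double points. By the simplicity assumption, exactly one double point $p_0$ is fixed, and it is of type $\Omega$. Every other double point is paired with a distinct partner. Each such non-fixed point is a type $A$ intersection, and I assign it weight $A$. The point $p_0$ gets weight $\Omega$, which forces $\rho$ to fix exactly one vertex of $T$.

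The arcs are chosen in $\tau$-orbits, in two steps:
\begin{enumerate}
\item For a sphere $S_i$ with $\tau(S_i)\neq S_i$, choose the connecting arcs on $S_i$ arbitrarily and transport them by $\tau$ to $\tau(S_i)$.
\item For a sphere with $\tau(S_i)=S_i$, the involution acts on $S_i\cong S^2$ as an involution (a rotation or a reflection-type action with fixed circle, since fixed sets are 2-dimensional overall). Here I choose an invariant system of arcs. One way is to take a spanning tree of the double points on $S_i$ that is invariant under the permutation, realized by arcs in a fundamental domain and their images. Another is to connect all double points on $S_i$ to $p_0$, if $p_0\in S_i$, along symmetric rays.
\end{enumerate}

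The conditions on the bipartitions then follow. For a weight-$A$ vertex $p$, $\tau$ carries the edges of $p$ on $S_i$ to edges of $\tau(p)$ on $\tau(S_i)$, which gives condition (2a) or (2b). For $p_0$ of type $B_\pm$, the two local sheets are swapped, so $\tau(P)=Q$. For type $C$, each sheet is preserved, so $\tau(P)=P$. The type of $f(p_0)$ is $\Omega$ by hypothesis.

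The main obstacle is step 2: producing a $\tau$-invariant tree of disjoint arcs on an invariant sphere. The difficulty is that $\tau$-invariance together with disjointness can force an arc to cross the fixed set of $\tau|_{S_i}$ or to be mapped to itself reversed, and that would create extra fixed points of $\rho$ in the interior of edges. Such points would be harmless only if edges may be flipped.

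I would handle this by routing all arcs on invariant spheres to the fixed vertex $p_0$, when $p_0$ lies on them. An invariant sphere not containing $p_0$ contains no fixed double point. Its double points are then freely permuted, and I would pick the arcs in the complement of the fixed set of $\tau|_{S_i}$ where possible. If an arc must be $\tau$-invariant, I would check that $\rho$ then flips an edge of $T$, which is still an automorphism fixing only the one vertex $p_0$. I would verify that this is compatible with the definition, since it only requires a single fixed vertex.
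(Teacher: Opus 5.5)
Your proposal follows the same route as the paper: an equivariant version of Marengon--Mihajlovi\'c's Lemma 3.8. The vertices are the $n-1$ double points, and the edges are arcs on the spheres chosen in $\tau$-orbits. It is more explicit than the paper's sketch, which inducts on symmetric pairs of nodes attached by equivariant pairs of paths. Your checks of the bipartition conditions and of the unique fixed vertex are correct.

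One step in your last paragraph is wrong, although the conclusion survives. The fallback ``$\rho$ flips an edge of $T$ but still fixes only $p_0$'' is impossible. The fixed set of an involution of a finite tree is connected, because geodesics are unique. So if it contains the vertex $p_0$ and the midpoint of an edge $e$, it also contains the endpoint of $e$ nearer to $p_0$, and then $e$ is not inverted.

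That case never occurs, however. Apply the same convexity argument to the plumbing graph $G$, with spheres as vertices and double points as edges. A $\tau$-invariant sphere not containing $p_0$ would be joined to the fixed midpoint of the edge $p_0$ by a pointwise-fixed path. That path contains a full edge other than $p_0$, i.e.\ a second fixed double point, which contradicts simplicity.

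Hence for type $B_\pm$ there are no invariant spheres at all. For type $C$, the only invariant spheres are the two sheets through $p_0$. On each of these, $\tau$ fixes $p_0$ and no other double point. So your star of arcs from $p_0$, drawn inside a fundamental domain for $\tau$ on that sphere and then transported by $\tau$, covers every case, and no edge of $T$ is ever $\tau$-invariant.
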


\begin{proof}
    The existence of a locally bipartitioned tree with $n-1$ nodes which suitably embeds in $\Sigma$ is guaranteed by Lemma 3.8 of \cite{marengon2022unknotting}. The construction used in their proof may be extended directly to $\Omega$ plumbing trees by performing induction on symmetric pairs of nodes connected to the pre-existing tree by equivariant pairs of paths, as opposed to single nodes. This, then, results in an equivariant suitable embedding of $(T,\{\Pi_v\},\rho,w)$ into $\Sigma$.
\end{proof}

Meanwhile, considering immersed surfaces instead of plumbing trees, we able to find large numbers of equivariantly locally bipartitioned trees which can be equivariantly suitably embedded. In the non-equivariant setting, Lemma 3.9 of \cite{marengon2022unknotting} shows that any locally bipartitioned tree with $n$ vertices can be suitably embedded into a normally immersed connected surface with $m>n$ self-intersections. The equivalent results holds in the symmetric setting assuming the self-intersections are of the proper type.

\begin{lemma}\label{lemma: closed surfaces}
    Let $(X,\tau)$ be a smooth 4-manifold with involution $\tau$ and $\Sigma=-\tau(\Sigma)$ be a normally immersed connected surface with $n$ type $A$ self-intersections and at least one type $\Omega$ self-intersection, for $\Omega\in \{B_+,B_-,C\}$. Then any type $\Omega$ locally bipartitioned tree with at most $n+1$ vertices can be equivariantly suitably embedded in $\Sigma$.
\end{lemma}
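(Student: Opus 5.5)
Since a type $\Omega$ tree has exactly one fixed vertex and its other vertices come in $\rho$-swapped pairs of weight $A$, a tree with at most $n+1$ vertices has at most $n$ type $A$ vertices. So I plan to send the fixed vertex $v_0$ to a chosen type $\Omega$ double point $x_0$ of $\Sigma$, and each pair $\{u,\rho(u)\}$ of type $A$ vertices to a $\tau$-swapped pair of type $A$ double points. Edges will then be realized by equivariant arcs on $\Sigma$, as in Lemma 3.9 of \cite{marengon2022unknotting}, performed pairwise. The order is: fix the vertex map, route the edges at $v_0$, then build outward from $v_0$ through the $\rho$-orbits of the remaining vertices.

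\textbf{Step 1 (vertex map).} Pick a type $\Omega$ double point $x_0$ and set $f(v_0)=x_0$. Choose any injection from the $\rho$-orbits of type $A$ vertices to the $\tau$-orbits of type $A$ double points, and make it equivariant on each orbit. When we place an orbit we may choose which of the two preimage sheets at $f(u)$ carries $P_u$. This choice must match condition (2)(a)/(b) of the definition, i.e.\ whether $\tau$ maps the $P$-sheet at $f(u)$ to the $P$- or $Q$-sheet at $f(\rho(u))$. I expect this to be adjustable: we can relabel at $u$ and let equivariance fix the labelling at $\rho(u)$. The point to verify is that $\tau$ exchanges the two local sheets at $f(u)$ with those at $f(\tau u)$ compatibly, which holds because $\tau$ is a diffeomorphism carrying a neighborhood of one double point to the other.

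\textbf{Step 2 (the fixed vertex).} At $x_0$ the local structure must match the weight. For $B_\pm$, $\tau$ swaps the two sheets, which matches $\tau(P_{v_0})=Q_{v_0}$; the sign of the intersection matches the Hopf link of Figure \ref{fig:hopf links}, per Lemma \ref{lemma: link from embedding (eq)}. For $C$, each sheet is preserved, which matches $\tau(P_{v_0})=P_{v_0}$. So the bipartition at $v_0$ is forced and consistent. Edges at $v_0$ come in $\rho$-pairs, except possibly an edge fixed by $\rho$. A fixed edge would join $v_0$ to a fixed vertex, and there is only one fixed vertex, so no edge is fixed. Hence for each pair $\{e,\rho(e)\}$ I route $e$ from $x_0$ along the prescribed sheet to its target in the preimage surface $\tilde\Sigma$, and define the arc for $\rho(e)$ as its $\tau$-image.

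\textbf{Step 3 (remaining edges and the main obstacle).} I then proceed inductively outward along the tree, routing one edge of each $\rho$-pair in $\tilde\Sigma$ and taking its $\tau$-image for the other. The main obstacle is keeping the embedding injective: an arc $\gamma$ must be disjoint from all earlier arcs, from the double points other than its endpoints, and from its own image $\tau(\gamma)$.

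Non-equivariantly, disjointness is arranged because $\tilde\Sigma$ minus finitely many arcs of a tree remains connected, which uses connectedness of $\Sigma$. Here I would work in the quotient: $\tau$ acts on the connected surface $\tilde\Sigma$, and I would route $\gamma$ in $\tilde\Sigma\setminus(\text{fix}\cup\text{previous arcs})$ projected to $\tilde\Sigma/\tau$, which is connected. A generic path in the quotient from the image of the source to the image of the target lifts to an arc avoiding its $\tau$-translate. Arcs leaving $x_0$ start on the fixed locus (type $C$) or at swapped preimages (type $B$); in both cases I need the lift to leave $x_0$ on the required sheet. I would handle this by choosing a short initial segment by hand, then continuing with the quotient path.

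The hypothesis $\Sigma=-\tau(\Sigma)$ is used only to guarantee that the local orientations at type $A$ pairs and at $x_0$ are as in the associated strongly invertible link. Finally, the sheet-partition condition (2) of a suitable embedding holds by construction, since each edge leaves $f(u)$ along the sheet assigned to its block $\pi_u(e)$.
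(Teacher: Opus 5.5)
Your proposal is correct. It shares the paper's skeleton: send the weight-$\Omega$ vertex to a type $\Omega$ double point, then add $\rho$-orbits of vertices one pair at a time, routing one arc and taking its $\tau$-image. Where you differ is in how you handle the one genuinely equivariant difficulty, a new arc meeting its own translate.

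\emph{The paper's route.} It uses a reconnect-and-relabel trick. If the arcs for $(v,u)$ and $(\tau v,\tau u)$ cross, it resolves them into arcs for $(v,\tau u)$ and $(\tau v,u)$, and notes that relabelling gives the same tree. This stays close to the template of Marengon--Mihajlovi\'c. However, crossings of $\gamma$ with $\tau(\gamma)$ come in pairs $p,\tau p$, and the paper does not spell out how to resolve them equivariantly.

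\emph{Your route.} You choose an arc in $(\tilde\Sigma\setminus\mathrm{Fix})/\tau$ avoiding the image of earlier arcs, and lift it. Disjointness from $\tau(\gamma)$ is then automatic, provided the arc is \emph{embedded} rather than merely generic. You also treat explicitly the type $C$ case, where paired arcs share a starting point on the fixed locus; the paper passes over this. Your argument is more self-contained and easier to check.

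Two details are needed to make your route airtight.

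\begin{enumerate}
\item[(i)] Connectivity of the quotient complement needs justification. The lift of the partial tree to $\tilde\Sigma$ is the tree split at each vertex according to its bipartition. So in the type $C$ case, the two preimages of $x_0$ lie in different components of this lifted forest. Each component's image in $\tilde\Sigma/\tau$ is then a tree meeting the fixed-point boundary at most once, and such a forest cannot separate.
\item[(ii)] Do not fix in Step 1 which of $y,\tau y$ is $f(w)$; take whichever one the lift reaches. When $\tilde\Sigma$ is a disk with a reflection, the double cover of the free part is trivial, so the lift's endpoint cannot be steered. This is exactly the case used in Theorem~\ref{thm: tubing plumbings}. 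This relabelling is precisely what the paper's swap achieves.
\end{enumerate}
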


\begin{proof}
    Let $(T,\{\Pi_v\},\rho,w)$ be an $\Omega$ locally bipartitioned tree with at most $n+1$ vertices. By Lemma 3.9 of \cite{marengon2022unknotting}, $(T,\{\Pi_v\})$ will suitably embed in $\Sigma$. To guarantee that this embedding can be taken to be an equivariant suitable embedding requires that the embedding is equivariant with respect to $\tau$ and that a vertex of weight $\Omega$ is taken to an intersection of type $\Omega$. In the proof of Lemma 3.9 from \cite{marengon2022unknotting} their construction proceeds by induction on the nodes. In the equivariant setting, we can simply begin their inductive argument on the vertex of weight $\Omega$. Since there is a self-intersection of type $\Omega$ in our surface, we may map our single vertex of type $\Omega$ there. From here, their argument holds almost identically if we instead perform induction on pairs of nodes $v$ and $\tau(v)$. The only possible conflict is that we must be sure that the embeddings of the edges connecting $v$ and $\tau(v)$ to the existing tree at nodes $u$ and $\tau(u)$ can be done simultaneously without intersection. If such an intersection existed, we can resolve the intersection equivariantly at the cost of replacing the edges $(v,u)$ and $(\tau(v),\tau(u))$ with $(v,\tau(u))$ and $(\tau(v),u)$, respectively. By symmetry, the tree obtained by this replacement is identical to our original tree. Therefore, we could have avoided having the intersection in the first place. 
\end{proof}

\section{Proofs of Main Results}\label{sec: examples}

In this section, we combine the previous results on equivariant suitable embeddings of locally bipartitioned trees with Lemma \ref{lemma: eq tubing} to prove existence of slice disks and examine various constructions. 

In the non-equivariant setting, one of the central results for constructing slice disks via plumbing trees is the following:

\begin{theorem}[\cite{marengon2022unknotting}, Theorem 1.2]\label{thm: non-eq} If there is a plumbing tree of $n$ smooth spheres $S=S_1\cup \dots \cup S_n$ embedded into a smooth 4-manifold $X^4$, then any knot $K\subset S^3$ with 4-dimensional clasp number $c_4\leq n-1$ is smoothly slice in $X^4$. 
\end{theorem}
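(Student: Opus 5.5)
The statement immediately above is Theorem \ref{thm: non-eq}, the non-equivariant result of Marengon and Mihajlovi\'c. The plan is to reconstruct their argument with the machinery of this section, forgetting all symmetry. The template is the one we will later make equivariant for Theorem \ref{thm: tubing plumbings}.

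First, suppose $c_4(K)\le n-1$. Then $K$ bounds a normally immersed disk $D\subset B^4$ with $m\le n-1$ double points. By adding local kinks we may arrange $m=n-1$ exactly; this only helps later embeddings.

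Second, apply Lemma 3.8 of \cite{marengon2022unknotting}, the non-equivariant version of Lemma \ref{lemma: existence in plumbings (eq)}, to the plumbing tree $S=S_1\cup\dots\cup S_n$. This gives a locally bipartitioned tree $(T,\{\Pi_v\})$ with $|V(T)|=n-1$ that suitably embeds in $S$, with the $n-1$ vertices going to the $n-1$ plumbing points. By Lemma 3.7 of \cite{marengon2022unknotting}, a regular neighborhood $B_1$ of $f(T)$ is a $4$-ball with $S\cap\partial B_1=L(T,\{\Pi_v\})$.

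Third, view $D$ inside a collar of the boundary of $X^\circ$, or equivalently view $D\cup S$ in $X^\circ$ after pushing $S$ away from $D$. By Lemma 3.9 of \cite{marengon2022unknotting}, the non-equivariant analogue of Lemma \ref{lemma: closed surfaces}, the mirror tree, carrying the same bipartition data, suitably embeds into $D$ through its $n-1$ double points. One must check the orientation and sign conventions so that its associated link is the mirror of $L(T,\{\Pi_v\})$. Since Hopf links with either sign are related by mirroring and reversal, and the associated link is unoriented, this is where care is needed. Call the resulting ball $B_2$; it is disjoint from $S$, and $B_1$ is disjoint from $D$.

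Finally, apply Lemma \ref{lemma: tubing} to $\Sigma_1=S$ and $\Sigma_2=D$. The links $\partial B_1\cap S$ and $\partial B_2\cap D$ are mirrors, so tubing removes all double points inside $B_1\cup B_2$. This accounts for every double point of $S\cup D$. The result is an embedded surface bounded by $K$.

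The main obstacle is the genus count: the result must be a disk, not merely a low-genus surface. The tube annuli join $S$ (a tree of spheres, resolved at its $n-1$ vertices) to $D$ along the link, which has $n$ components because the associated link of a tree with $n-1$ vertices has $n$ components. An Euler characteristic computation must show the surface is connected with $\chi=1$. On the $S$ side, removing the neighborhood of $f(T)$ leaves $n$ disks, one per sphere, since $T$ is a tree meeting each sphere in a connected set. On the $D$ side, removing the neighborhood of the tree leaves a disk with $n$ holes. Gluing the $n$ disks along $n$ annuli therefore gives a disk. This counting, together with the sign and mirror bookkeeping, is the step I expect to require the most attention.
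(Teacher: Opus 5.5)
Your proposal is correct and is essentially the Marengon--Mihajlovi\'c argument, which the paper cites here and imitates in its proof of Theorem~\ref{thm: tubing plumbings}: embed the same locally bipartitioned tree with $n-1$ vertices in both $S$ and $D$, use that the unoriented associated link is isotopic to its own mirror, and tube with Lemma~\ref{lemma: tubing}. The one deviation is that you pad $D$ with local kinks to exactly $n-1$ double points instead of pruning the tree to $c_4(K)$ vertices (pruning, as the paper does, also forces you to discard the spheres no longer touched), and your Euler characteristic count correctly supplies the ``adds no genus'' step that the paper leaves implicit. Two of your assertions are stated without proof but are true: each $f(T)\cap S_i$ is connected because $\partial(S\setminus \mathrm{int}\,B_1)$ has $n$ components with at least one per sphere, and the preimage forest of $f(T)$ in the domain of $D$ has $2(n-1)-(n-2)=n$ components.
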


We now extend this result to the equivariant setting, proving Theorem \ref{thm: tubing plumbings}:

\begin{theorem1} 
    Let $S=S_1\cup\dots \cup S_n$ be a type $\Omega$ plumbing tree in $(X,\tau)$ with fixed vertex $v\in \Sigma$, a component of $fix(\tau)$. Then a strongly invertible knot $K$ is $(X,\tau,\Sigma)$ slice if it bounds a $\tau$-invariant immersed disk $D$ in $B^4$ satisfying:
    \begin{enumerate}
        \item $D$ has $k\leq n-1$ self-intersections,
        \item $D$ has a single type $-\Omega$ self-intersection,
        \item the remaining $n-2$ self intersections of $D$ are type $A$.
    \end{enumerate}
\end{theorem1}

\begin{proof}
    By Lemma \ref{lemma: existence in plumbings (eq)}, there exists a type $\Omega$ locally bipartitioned tree with $n-1$ nodes which equivariantly suitably embeds in $S$. Then, let $(T,\Pi,\rho,w)$ be an equivariantly locally bipartitioned sub-tree of that tree, such that $T$ has exactly $k$ nodes. This subtree $T$ is obtainable from the original tree by pruning paired nodes of type $A$ until you get a subtree containing $k$ nodes. Then $(T,\Pi,\rho,w)$ is also a type $\Omega$ tree which equivariantly suitably embeds in $S$. Additionally, by Lemma \ref{lemma: closed surfaces}, the mirror of this tree also equivariantly suitably embeds into $D$.

    Now, instead of a disk in $B^4$, $D$ can be considered as a disk in $S^3\times I$ by removing a neighborhood of the fixed-point set away from $D$. Therefore, we may construct an equivalent disk, which we also call $D$, in a symmetric collar neighborhood of $\partial X^\circ_{\tau,\Sigma}$ bounded by $K\subset \partial X^\circ_{\tau,\Sigma}$. Then, $D$ is a symmetric disk in $X^\circ_{\Sigma,\tau}$, bounded by $K$, which $(T,\Pi,\rho,w)$ still equivariantly suitably embeds into. Since this is in a small neighborhood of the boundary, we may take $D$ to be disjoint from the plumbing forest $S$.    

    By Lemma \ref{lemma: link from embedding (eq)}, the links in the boundaries of small neighborhoods of the embeddings of $T$ into $D$ and $S$ are the same as strongly invertible links and isotopic to their own mirror images. Thus, since the neighborhoods of each tree have fixed point set a disk in the same components $\Sigma$ of $\tau$, we may apply Lemma \ref{lemma: eq tubing}. This then equivariantly tubes them together, removing all self-intersections in $T$. As in the non-equivariant case, this adds no genus and therefore results in a $\tau$-invariant disk, i.e. a symmetric slice disk for $K$.
    

\end{proof}

Applying this result, and Lemma \ref{lemma: fig8 not slice}, we get the proof of Theorem \ref{thm: different S2 sliceness}:

\begin{proof}[Proof of Theorem \ref{thm: different S2 sliceness}]
    Consider $S^2\times S^2$ with the involution $\tau_1$ swapping the two spheres. That is to say, letting $(x,y)$ be a point in $S^2\times S^2$, $\tau_1(x,y)=(y,x)$. This is the same involution used in the proof of Lemma \ref{lemma: fig8 not slice} where we saw that the figure eight knot with a specific strong inversion was not equivariantly slice in $(S^2\times S^2,\tau_1)$. We will see here that the mirror of this knot is $(S^2\times S^2,\tau_1)$-slice. We will let $K$ refer to the original strongly invertible figure eight and $mK$ refer to its mirror.
    
    A Kirby diagram for $(S^2\times S^2,\tau_1)$ is shown in Figure \ref{fig:hopf links} (middle left) by realizing the components of the Hopf link as the attaching spheres for 0-framed 2-handles. The linking of the Hopf link in the diagram with the fixed axis is important. For this Hopf link, $(S^2\times S^2,\tau_1)$ has a type $B_+$ plumbing tree, consisting of a single plumbing of two spheres, $S^2\times \{pt\}$ and $\{pt\}\times S^2$. Since $mK$ can be equivariantly unknotted via a single type $B_+$ crossing change, this means that it bounds an invariant immersed disk in $B^4$ with a single clasp which is of type $B_+$. Therefore, by Theorem \ref{thm: tubing plumbings}, we have that $mK$ is equivariantly slice in $(S^2\times S^2,\tau_1)$.

    Now consider $S^2\times S^2$ with the involution $\tau_2$, which both swaps and reflects the factors. That is to say, letting $\funct{r}{S^2}{S^2}$ be reflection across a hemisphere, $\tau_2(x,y)=\tau_1(r(x),r(y))$. Now there exists a type $B_-$ plumbing tree, making $K$ equivariantly slice by the same argument as above. By a similar argument to Lemma \ref{lemma: fig8 not slice}, we also see that $mK$ is, in fact, not equivariantly slice in $(S^2\times S^2,\tau_2)$. 

    Thus, we must then have that $\tS(S^2\times S^2,\tau_1)$ and $\tS(S^2\times S^2,\tau_2)$ are distinct, with neither contained in the other.
\end{proof}

\begin{figure}[ht]
        \centering
        \includegraphics[width=0.55\linewidth]{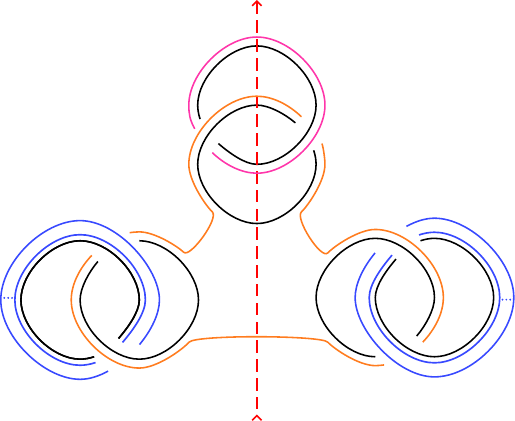}
        \caption{Kirby diagram for $(S^2\times S^2)^3$ with a type $C$ plumbing tree.}
        \label{fig:s2xs2}
    \end{figure}

We can also use Theorem \ref{thm: tubing plumbings} to prove Corollary \ref{cor: S^2timesS^2}.

\begin{proof}
    Let $\tau$ be the orientation preserving involution on $X=(S^2\times S^2)^3$ which interchanges the first two $S^2\times S^2$ summands, fixing some $\Sigma=S^2$ in the boundary of the 4-ball removed from each component to form the connect sum, and acts on the third via reflection on each factor (see the Kirby diagram in Figure \ref{fig:s2xs2}). Then, for every $n>0$ there exists a type $C$ pluming tree $S_n$ in $(X,\tau)$ consisting of $2n+2$ spheres which has a single type $C$ fixed point and $2n$ type $A$ self-intersections. This tree is depicted in Figure \ref{fig:s2xs2}, where the black Hopf links are 0-framed 2-handles and the colored circles are cross-sections of the spheres in the plumbing tree.

    Now, let $K$ be a strongly invertible knot and let $K'$ be the knot obtained by performing a single type $C$ crossing change on $K$. By \cite{boyle2024equivariant}, $K'$ can be equivariantly unknotted via $n$ type $A$ moves, for some finite $n\geq 0$. Therefore, $K$ bounds a disk $D$ with $2n+1$ self-intersections, a single type $C$ intersection, and $2n$ type $A$ self-intersections. Thus, by Theorem \ref{thm: tubing plumbings}, $K$ is $(X,\tau)$-slice.
\end{proof}

\printbibliography

\end{document}